\newcommand{\be}{\begin{enumerate}}
\newcommand{\ee}{\end{enumerate}}
\newtheorem{cor}[subsubsection]{Corollaire}
\newtheorem{souscor}[paragraph]{Corollaire}
\newtheorem{sousprop}[paragraph]{Proposition}
\newtheorem{prop}[subsubsection]{Proposition}
\newtheorem{surprop}[subsection]{Proposition}
\newtheorem{surlem}[subsection]{Lemme}
\newtheorem{lem}[subsubsection]{Lemme}
\newtheorem{souslem}[paragraph]{Lemme}
\newtheorem{sousrems}[paragraph]{Remarques}
\newcommand{\inft}{{\scriptstyle\infty}}
\newcommand{\what}{\widehat}
\newcommand{\ot}{\otimes}
\newcommand{\rig}{\rightarrow}
\newcommand{\hrig}{\hookrightarrow}
\newcommand{\sta}{\stackrel}
\newcommand{\Ne}{{\bf N}}
\newcommand{\varep}{\varepsilon}
\newcommand{\lam}{\lambda}
\newcommand{\Ga}{\Gamma}
\newcommand{\EE}{{\cal E}}
\newcommand{\OO}{{\cal O}}
\newcommand{\UU}{{\cal U}}
\newcommand{\ov}{\overline{v}}
\newcommand{\ovV}{\overline{V}}
\newcommand{\ovK}{\overline{K}}
\newcommand{\spec}{{\rm spec}\,}
\def\fil{\mathop{\rm Fil}\nolimits} 
\def\spec{\mathop{\rm Spec}\nolimits}
\def\val{\mathop{\rm val}\nolimits}
\def\fil{\mathop{\rm Fil}\nolimits} 
\begin{document}
\title{Représentations galoisiennes associées aux courbes hyperelliptiques lisses\thanks{Les deux auteurs ont bénéficié du soutien du projet CETHop : ANR-09-JCJC-0048-01, 
coordonné par Xavier Caruso, projet de l'Agence Nationale de la Recherche.}}
 \author{C.~Huyghe and N.~Wach} 
\maketitle
\selectlanguage{francais}
\selectlanguage{english}
\begin{abstract} In 2003, Kedlaya gave an algorithm to compute the zeta function
associated to a hyperelliptic curve over a finite field, by computing the rigid cohomology of the curve. 
Edixhoven remarked that it is actually possible to compute the crystalline cohomology of
the curve, which is a lattice in the rigid cohomology. 
Following a method of Wach, we first explain how to use this lattice to compute the $(\varphi,\Gamma)$-module associated to an
hyperelliptic curve. We also explain an alternative way to get the $(\varphi,\Gamma)$-module mod $p$ that relies on
the Deligne-Illusie morphism.
\end{abstract}
\selectlanguage{francais}
\begin{abstract} En 2003, Kedlaya a donné un algorithme pour calculer la fonction zêta 
d'une courbe hyperelliptique sur un corps fini, en calculant la cohomologie rigide de la courbe. Edixhoven a
remarqué qu'on pouvait en fait calculer la cohomologie cristalline de la courbe, qui est
un réseau dans la cohomologie rigide. Dans un premier temps, nous expliquons
 comment utiliser ce réseau pour calculer le $(\varphi,\Gamma)$-module associé à une courbe hyperelliptique,
en suivant une méthode due à Wach. Ensuite nous donnons une autre méthode pour faire le calcul du 
$(\varphi,\Gamma)$-module mod $p$ reposant sur le morphisme de Deligne-Illusie.
\end{abstract}
\let\thefootnote\relax\footnote{MSC classification 2010 : 11S23, 14F30, 14F40}
%

%
\tableofcontents
\section*{Introduction}

Soient $k$ un corps fini de caractéristique $p$ différente de $2$, de cardinal $p^n$, $W=W(k)$,
l'anneau des vecteurs de Witt de $k$,
 $K$ le corps des fractions de $W$, $S$ le schéma $\spec\,W$, 
$X$ une courbe hyperelliptique lisse sur $W(k)$, de genre $g$,
$X_k$, resp. $X_K$ la fibre spéciale, resp. la fibre générique de $X$. Le but de ce texte
est d'expliquer comment calculer modulo $p^i$ le $(\varphi,\Gamma)$-module associé à la 
courbe $X_K$, c'est-à-dire le  $(\varphi,\Gamma)$-module associé à la représentation
galoisienne
$H^1_{et}(X_{\ovK},\mathbf{Q}_p)$. Nous donnons aussi une autre façon de calculer 
modulo $p$ ce $(\varphi,\Gamma)$-module.

La méthode générale modulo $p^i$ exposée en ~\ref{methode_kedlaya} 
repose sur le calcul par Kedlaya \cite{Kedlaya_hyperell}, de la cohomologie rigide d'une courbe
hyperelliptique et de l'action du Frobenius sur cette cohomologie. Edixhoven 
a ensuite remarqué dans \cite{Edixhoven03pointcounting} que l'on pouvait calculer un réseau (en fait la
cohomologie cristalline) dans la cohomologie rigide des courbes hyperelliptiques, peut-être inspiré par un argument
de Katz dans le cas des courbes elliptiques qu'on trouve dans A1.2 de \cite{katz-pad_mod_scheme_form}. 
Bogaart a enfin donné la
démonstration du résultat d'Edixhoven dans un preprint non publié \cite{Bogaart}. Cette 
approche a été systématisée par Lauder \cite{Lauder-rec-zeta}, ainsi que par Abbott, Kedlaya et Roe
\cite{Abbott-Kedlaya-Roe-Pic-Numb}. Même si on peut calculer la cohomologie cristalline d'une courbe hyperelliptique,
on ne dispose pas cependant de calcul purement cristallin du Frobenius : pour ce
faire, il faut utiliser la méthode de Kedlaya. Le calcul du $(\varphi,\Gamma)$-module associé 
à la courbe $X_K$ fait intervenir un algorithme dû à Wach \cite{Wa97} et nécessite de
disposer d'un réseau stable par le Frobenius dans la cohomologie rigide de $X_k$, d'où
l'intérêt de la remarque d'Edixhoven. Le résultat que nous prouvons est le suivant: connaissant la matrice du Frobenius
dans une base adapt\'ee \`a la filtration de Hodge de la cohomologie cristalline, il est possible de d\'eterminer
algorithmiquement le $(\varphi,\Gamma)$-module associé à la courbe $X$. Si la courbe est de genre $g$, la
  complexit\'e de l'algorithme calculant le $(\varphi,\Gamma)$-module modulo $(p^i,T^j)$ est au plus $O(p^{1+\varepsilon}n^{1+\varepsilon}g^{2,81}i^{2+\varepsilon}j^{2+\varepsilon})$. Il faut y rajouter la   complexit\'e de l'algorithme de Kedlaya pour calculer la matrice de Frobenius modulo $p^{i+1}$ qui est $O(p^{1+\varepsilon}n^{1+\varep}g^{2+\varep}i^{2+\varepsilon})$ et celle d'un changement de base.

On peut se poser une question plus faible et calculer seulement le
$(\varphi,\Gamma)$-module modulo $p$ associé à la courbe hyperelliptique. On donne en~\ref{frob_divise} un calcul direct 
 pour d\'eterminer la matrice du Frobenius divis\'e sur la cohomologie cristalline modulo $p$. Ce
calcul repose sur le morphisme de Deligne-Illusie \cite{deligne_illusie_dec_dr} et est une conséquence du résultat de~\cite{huyghe-wach_interp-crist}. L'algorithme de Kedlaya est alors remplacé par un algorithme dont la   complexit\'e est $O(p^{1+\varepsilon}n^{1+\varep}g^{2+\varep})$, comparable \`a celle de l'algorithme de Kedlaya. Cette m\'ethode a cependant l'avantage de s'adapter facilement \`a d'autres familles de courbes ({\it loc. cit.}) et de proposer directement la matrice du Frobenius divis\'e dans une base adapt\'ee \`a la filtration.

\vspace{+2mm}
Voici le contenu de cet article.
Après une première partie de notations, nous donnons dans une deuxième partie une
stratégie générale pour calculer la cohomologie de de Rham d'une courbe projective lisse sur $W$. En effet, 
d'après le théorème de comparaison de Berthelot, cette cohomologie de de Rham est
canoniquement isomorphe à la cohomologie cristalline de la courbe $X_k$. En particulier, cela
nous donne un réseau stable par le Frobenius dans la cohomologie rigide de $X_k$.
Cette stratégie de calcul
reprend celle de Bogaart-Edixhoven et est exposée ici dans le cadre des catégories
dérivées.
L'argument est simple : si on twiste le complexe de de Rham de $X$ par une puissance
suffisante du faisceau inversible associé au diviseur à l'infini, on obtient un complexe acyclique pour le foncteur
sections globales. Et le quotient des deux complexes est un complexe à support au point
à l'infini. Du coup, tout est facilement calculable dans la catégorie dérivée des
complexes de $W$-modules, pourvu que l'on sache déterminer les 
sections globales du module des différentielles. C'est ce que nous faisons dans la
troisième partie. 

Dans la quatrième partie, nous détaillons comment trouver le $(\varphi,\Gamma)$-module
associé à une courbe. On consid\`ere le produit tensoriel sur $W$ de l'anneau $S=W[T]$ et du $W$-module
$H^1_{cris}(X)$, sur lequel on d\'efinit une action naturelle de $\varphi$; l'algorithme, d\'ej\`a expliqu\'e dans
\cite{Wa97}, consiste \`a d\'eterminer la matrice d'un g\'en\'erateur de $\Gamma$ par d\'evissages modulo les puissances
de $p$ et les puissances de $T$. Il suffit alors de calculer l'inverse de la matrice du Frobenius divis\'e modulo $p$. 

Nous appliquons ces résultats au cas des courbes hyperelliptiques dans la cinquième partie.
L'algorithme donnant le $(\varphi,\Gamma)$-module 
à une précision $(p^i,T^j)$ fixée reposant sur l'algorithme de Kedlaya est donné en ~\ref{methode_kedlaya}.
En \ref{frob_divise}, nous pr\'esentons une autre méthode pour calculer modulo $p$ la matrice du Frobenius divisé sur la cohomologie cristalline, c'est-\`a-dire la matrice qu'il faut inverser pour déterminer le $(\varphi,\Gamma)$-module modulo $p$.  

\vspace{+2mm}
Nous remercions le Referee pour ses remarques constructives sur divers aspects de ce texte, 
ainsi que Jérémy Leborgne, et Marcela Szopos qui nous ont aidées à éclaircir les problèmes de
complexité.

\section{Conventions, notations}\label{notations}
Si $X$ est un $S$-schéma, nous noterons $X_0$ la fibre spéciale et $X_K$ la fibre
générique.

Une courbe $X$ sur $S$ est par définition un $S$-schéma noetherien de dimension $1$ qui
est de type fini sur $S$, irréductible, dont les fibres sur $S$ (c'est-à-dire la fibre spéciale et la
fibre générique) sont réduites et connexes, géométriquement 
réduites et géométriquement connexes. On suppose de plus qu'il existe une 
section $i_s$ : $S\hrig X$ correspondant à un diviseur relatif de degré $1$ sur $S$.

Sous ces hypothèses (3.3.21 de \cite{Liu}), $H^0(X_0,\OO_{X_0})=k$ et
$H^0(X_K,\OO_{X_K})=K$.

On dit qu'une courbe $X$ est de genre $g$ 
si la fibre générique et la fibre spéciale sont de genre arithmétique égal à $g$.
Dans le cas où $X$ est lisse, ce qui sera toujours le cas ici, l'égalité de ces deux
nombres est automatique, par invariance de la caractéristique d'Euler Poincaré (5.3.28 de
\cite{Liu}), et platitude de $\OO_X$ sur $W$. De plus, si $X$ est lisse, $X$ est un schéma
intègre.

Dans cet article, la catégorie $D^b(W)$ désignera la catégorie dérivée des complexes de 
$W$-modules sur une courbe $X$, à cohomologie bornée. 

Nous disons qu'une courbe lisse $X$ vérifie la
condition de Mazur explicitées dans l'appendice de ~\cite{Mazur-hodge-filt},
si
\begin{gather}\label{conditions-mazur}
 \forall i\in\{0,1\},\forall j\in\{0,1\}\;H^i(X,\Omega^j_{X}) \text{ est un }W\text{
module libre }. 
\end{gather}
Si ces conditions sont vérifiées, alors, par définition, les modules $E_1^{j,i}$ de  la suite spectrale de Hodge vers de Rham
sont des $W$-modules libres et donc la suite spectrale de Hodge vers de Rham 
dégénère d'après loc. cit..

\section{Méthode de calcul du complexe de de Rham d'une courbe} \label{complexesDR}  
Dans cette section $X$ est une courbe lisse de genre $g$ comme définie en ~\ref{notations}. 
Si $C$ est le complexe de de Rham $$C \,\colon\, 0 \rig \OO_X \rig
\Omega^1_X \rig 0, $$ dont les termes sont en degré $0$ et $1$, la cohomologie de de Rham
de $X$ est la cohomologie du complexe $R\Gamma(X,C)$. Nous
considérons aussi le complexe de de Rham twisté $C(nD)$ où $D$ est un diviseur de $X$ $$C(nD)\,\colon\, 0 \rig \OO_X((n-1)D) \rig
\Omega^1_X(nD) \rig 0. $$

La filtration bête du complexe $C$ induit la suite spectrale de Hodge vers de Rham. La proposition suivante rappelle un critère de
dégénerescence standard.
\begin{surprop} \label{suite-spectrale}\be 
\item[(i)]Sous nos hypothèses, la condition de Mazur ~\ref{conditions-mazur} est vérifiée. En particulier, la
suite spectrale de Hodge vers de Rham dégénère au niveau $E_1$. De plus $H^0(X,\OO_X)=W$. 
\item[(ii)]Les modules
$H^0_{DR}(X)$ et $H^2_{DR}(X)$ s'identifient à $W$, le module $H^1_{DR}(X)$ est un $W$-module libre.
\ee 
\end{surprop}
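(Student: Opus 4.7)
Le plan consiste \`a \'etablir la condition de Mazur~\ref{conditions-mazur} par un argument standard de cohomologie et changement de base pour le morphisme propre et plat $\pi: X\rig S$, puis \`a invoquer le crit\`ere rappel\'e dans l'appendice de~\cite{Mazur-hodge-filt} pour en d\'eduire la d\'eg\'en\'erescence de la suite spectrale. La partie (ii) en d\'ecoulera formellement. La courbe $X$ \'etant lisse sur $S$, les faisceaux $\OO_X$ et $\Omega^1_X$ sont localement libres sur $X$, donc $W$-plats, et $\pi$ est propre et plat. On applique alors le th\'eor\`eme de cohomologie et changement de base (EGA III, 7.7--7.9)~: pour $\FF\in\{\OO_X,\Omega^1_X\}$ et $i\in\{0,1\}$, si la fonction $s\mapsto \dim_{\kappa(s)} H^i(X_s,\FF_s)$ prend la m\^eme valeur au point ferm\'e et au point g\'en\'erique de $S$, alors $R^i\pi_*\FF$ est un $W$-module libre et commute au changement de base.

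Il reste donc \`a v\'erifier la constance des dimensions sur les fibres. L'hypoth\`ese rappel\'ee en~\ref{notations} donne $\dim H^0(X_s,\OO_{X_s})=1$ en chacun des deux points de $S$. Comme les deux fibres sont des courbes lisses propres de genre $g$, on a de m\^eme $h^1(\OO)=g$, $h^0(\Omega^1)=g$ et $h^1(\Omega^1)=1$ des deux c\^ot\'es (la derni\`ere \'egalit\'e r\'esulte de la dualit\'e de Serre). Le changement de base assure alors la libert\'e des quatre modules $H^i(X,\Omega^j_X)$, c'est-\`a-dire la condition de Mazur~; le crit\`ere de l'appendice de~\cite{Mazur-hodge-filt} donne ensuite la d\'eg\'en\'erescence au niveau $E_1$. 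Comme $H^0(X,\OO_X)$ est un $W$-module libre de rang $1$ se sp\'ecialisant en $k$, on a $H^0(X,\OO_X)=W$, ce qui ach\`eve (i).

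Pour (ii), la d\'eg\'en\'erescence fournit une filtration de $H^n_{DR}(X)$ \`a gradu\'es $H^{n-p}(X,\Omega^p_X)$. On en d\'eduit $H^0_{DR}(X)=H^0(X,\OO_X)=W$, $H^2_{DR}(X)=H^1(X,\Omega^1_X)\simeq W$, et $H^1_{DR}(X)$ appara\^it comme extension de $H^1(X,\OO_X)$ (libre de rang $g$) par $H^0(X,\Omega^1_X)$ (libre de rang $g$), donc est libre de rang $2g$ sur l'anneau local $W$. L'argument ne pr\'esente pas de v\'eritable obstacle technique~: le point clef est simplement l'application correcte du changement de base, qui se ram\`ene \`a la v\'erification de la constance des dimensions sur les fibres, imm\'ediate gr\^ace \`a l'hypoth\`ese de m\^eme genre sur les deux fibres.
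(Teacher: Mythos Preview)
Your proof is correct and follows essentially the same approach as the paper. The paper invokes the semi-continuity theorem from Liu (5.3.20) where you cite EGA~III base change, and it re-derives the equality of fiber dimensions via Euler--Poincar\'e invariance and Serre duality where you use the genus hypothesis directly; these are cosmetic differences and the logical structure (constancy of $h^i(\Omega^j)$ on fibers $\Rightarrow$ freeness $\Rightarrow$ Mazur's criterion $\Rightarrow$ d\'eg\'en\'erescence) is identical.
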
 
\begin{proof}Le terme
général de la suite spectrale de Hodge vers de Rham est $E_1^{j,i}=H^i(X,\Omega^j_X)$, les différentielles de la suite spectrale proviennent
des flèches canoniques $H^i(X,\Omega^j_X)\rig H^{i}(X,\Omega^{j+1}_X)$ et la dégenerescence de la suite spectrale équivaut, dans le cas d'une
courbe lisse $X$, à la nullité de la flèche $H^1(X,\OO_X)\rig H^1(X,\Omega^1_X)$.

Montrons que les modules $H^i(X,\Omega^j_X)$ sont des $W$-modules libres. Ceci
résulte par exemple du lemme 3.1 de ~\cite{Bogaart}, dont nous reproduisons la démonstration ici. D'après le théorème de semi-continuité
(5.3.20 de ~\cite{Liu}), il suffit de vérifier que les groupes de cohomologie $H^i(X_k,\Omega^j_{X_k})$ et $H^i(X_K,\Omega^j_{X_K})$ ont même
dimension pour $i,j\in\{0,1\}$. La
caractéristique d'Euler-Poincaré des faisceaux $\OO_{X_K}$
et $\OO_{X_0}$ est la même. De plus, $H^0(X_0,\OO_{X_0})=k$ et $H^0(X_K,\OO_{X_K})=K$ ont
même dimension sur $k$ et $K$ respectivement, de sorte que $H^1(X_0,\OO_{X_0})$ et 
$H^1(X_K,\OO_{X_K})$ ont aussi même dimension. Par dualité de Serre, les modules 
$H^i(X_0,\Omega^1_{X_0})$ et $H^i(X_K,\Omega^1_{X_K})$ ont aussi même dimension. Ainsi 
la courbe $X$ vérifie la condition de Mazur.

Considérons maintenant $M=H^0(X,\OO_X)$: c'est un $W$-module libre $M$ tel que 
$M\ot K$ est isomorphe à $H^0(X_K,\OO_{X_K})=K$, et qui est donc isomorphe à $W$. 
 Par dualité de Serre (III, \S 11 de \cite{hartshorne_res_duality}) 
et compte tenu du fait que $H^1(X,\Omega^1_X)$ est un $W$-module libre, nous trouvons que 
$H^1(X,\Omega^1_X)\simeq W$.
 Ceci mis ensemble montre (i). La dégénerescence de la suite spectrale de Hodge vers de
Rham implique que l'on a une suite exacte courte 
$$0\rig H^0(X,\Omega^1_X) \rig H^1_{DR}(X)\rig H^1(X,\OO_X) \rig 0,$$ de sorte que 
le $W$-module $H^1_{DR}(X)$ est libre.
De plus $H^0_{DR}(X)\simeq H^0(X,\OO_X)\simeq W$, et $H^2_{DR}(X)\simeq H^1(X,\Omega^1_X)=W$.  \end{proof}

Considérons maintenant un diviseur relatif $D$ sur $X$. Alors nous pouvons compléter la
proposition précédente par la
\begin{surprop}\label{acyc} \begin{enumerate} \item Si $D$ est de degré $>0$, alors
$H^0(X,\Omega^1_X(D))$ est
un $W$-module libre et $H^1(X,\Omega^1_X(D))=0$. 
\item Si $D$ est de degré $\geq 1$ et si $n$ est un entier tel que $(n-1)deg(D)>2g-2$, alors
$H^0(X,\OO_X((n-1)D))$ est un $W$-module libre et le complexe 
$C(nD)$ est à termes acycliques pour le foncteur $\Gamma$.
 \end{enumerate}
\end{surprop}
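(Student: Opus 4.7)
The plan is to copy the template used in the proof of Proposition~\ref{suite-spectrale}: reduce each vanishing or freeness assertion over $W$ to the analogous assertion on the two fibres $X_k$ and $X_K$ via the semi-continuity theorem (5.3.20 of \cite{Liu}), then establish the fibrewise statements using Serre duality and Riemann--Roch on smooth connected projective curves over a field. The key observation is that the degree of a relative divisor $D$ on $X/S$ coincides with the degree of its restriction to either fibre, so the numerical hypotheses transfer verbatim.

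For (i), I would first check that $H^1(X_*, \Omega^1_{X_*}(D_*))$ vanishes on each fibre. Serre duality identifies this group with the dual of $H^0(X_*, \OO_{X_*}(-D_*))$, which vanishes since $-\deg(D) < 0$ on a smooth connected projective curve. Riemann--Roch then forces $h^0(\Omega^1_{X_*}(D_*)) = g - 1 + \deg(D)$, the same integer on both fibres. Semi-continuity upgrades this to: $H^1(X, \Omega^1_X(D)) = 0$ and $H^0(X, \Omega^1_X(D))$ is a finitely generated flat, hence free, $W$-module.

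For (ii), the same template applies to each term of $C(nD)$. Under the hypothesis $(n-1)\deg(D) > 2g-2$, Serre duality shows that $H^1(X_*, \OO_{X_*}((n-1)D_*))$ is dual to $H^0(X_*, \Omega^1_{X_*}(-(n-1)D_*))$, which vanishes because the degree $2g-2-(n-1)\deg(D)$ is negative; Riemann--Roch fixes the common dimension $h^0 = (n-1)\deg(D) + 1 - g$ on both fibres, and semi-continuity then yields freeness of $H^0(X, \OO_X((n-1)D))$ together with the vanishing of $H^1$. For the second term $\Omega^1_X(nD)$, the inequality $n\deg(D) = (n-1)\deg(D) + \deg(D) > 2g-1$ together with $\deg(D) \geq 1$ gives $n\deg(D) > 0$ in the range of interest (in particular as soon as $g \geq 1$), so part (i) applied to the divisor $nD$ yields $H^1(X, \Omega^1_X(nD)) = 0$. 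Both terms of $C(nD)$ are then acyclic for $\Gamma$.

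No step looks like a genuine obstacle; the only point needing some care is the applicability of semi-continuity, which requires the sheaves $\OO_X((n-1)D)$ and $\Omega^1_X(nD)$ to be flat over $W$. This flatness follows from $X$ being smooth (hence flat) over $W$ and $D$ being a relative Cartier divisor, so these sheaves are locally free over $\OO_X$ and therefore $W$-flat.
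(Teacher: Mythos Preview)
Your proposal is correct and follows essentially the same route as the paper's own proof: vanish $H^1$ on each fibre via Serre duality, check that the $h^0$'s agree on the two fibres, and invoke 5.3.20 of \cite{Liu} to conclude freeness over $W$ and vanishing of $H^1$. The only cosmetic difference is that you compute $h^0$ on each fibre directly by Riemann--Roch, whereas the paper phrases the same step as ``invariance of the Euler--Poincar\'e characteristic'' together with the already-established vanishing of $h^1$.
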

\begin{proof}
Là encore, nous suivons 3.1 de \cite{Bogaart}. Si $D$ est de degré $>0$, alors $dim\,
H^0(X_0,\OO_{X_0}(-D))=dim\, H^0(X_K,\OO_{X_K}(-D))=0$ et donc, par dualité de Serre, 
$dim\, H^1(X_0,\Omega^1_{X_0}(D))=dim\, H^1(X_K,\Omega^1_{X_K}(D))=0$, ce qui entraîne
comme précédemment que 
$H^1(X,\Omega^1_X(D))=0$ car ce module est un $W$-module de type fini.
 Par invariance de 
la caractéristique d'Euler-Poincaré, il y a donc aussi égalité des dimensions des
$H^0(X_0,\Omega^1_{X_0}(D))$ et de $H^0(X_K,\Omega^1_{X_K}(D))$, d'où le fait que 
$H^0(X,\Omega^1_{X}(D))$ est un $W$-module libre et (i). En
particulier,
avec les hypothèses de (ii), on sait que $H^1(X,\Omega^1_X(nD))=0$.
  Le raisonnement est identique pour (ii), puisque
$H^0(X_0,\Omega^1_{X_0}(-(n-1)D))$
(resp. $H^0(X_K,\Omega^1_{X_K}(-(n-1)D))$) est nul, car le degré de $\Omega^1(-(n-1)D)$ est $<0$.
Par dualité de Serre, $H^1(X_0,\OO_{X_0}((n-1)D))$ (resp. $H^1(X_K,\OO_{X_K}((n-1)D))$) est nul, et 
ceci montre que $H^1(X,\OO_X((n-1)D))=0$. Par invariance de la caractéristique
d'Euler-Poincaré, on obtient aussi que $H^0(X_0,\OO_{X_0}((n-1)D))$ et
$H^0(X_K,\OO_{X_K}((n-1)D))$ ont même dimension et donc que $H^0(X,\OO_X((n-1)D))$
est un $W$-module libre.
\end{proof}
\noindent{\bf Remarque.} On notera que la différentielle $$
d_{1}^{\{0,0\}}\,\colon\,H^0(X,\OO_X((n-1)D))\rig H^0(X,\Omega^1_X(nD)),$$ n'est
pas nulle en général (par exemple pour $\mathbf{P}^1_S$, $n=2$, $D$ le point à l'infini) et donc que la suite spectrale associée à la
filtration bête du complexe $C(nD)$ ne dégénère pas en général.

Fixons maintenant une section $i_s$ : $S\hrig X$ correspondant à un diviseur relatif de
degré $\geq 1$ et $n$ strictement supérieur à $2g-2$. La
suite exacte de complexes $$0 \rig C \rig C(nD) \rig C(nD)/C\rig 0,$$ donne lieu à un triangle distingué (T') dans $D^b(W)$.

Si $t$ est un paramètre local définissant la section $s$, le complété de $\OO_{X,t}$ pour la topologie $t$-adique s'identifie à l'anneau de
séries formelles $W[[t]]$ (chap. 8, \S 5 Anneaux locaux réguliers, thm 2 de \cite{bourbaki_alc_89}) et le complexe de faisceaux gratte-ciels
$C(nD)/C$ au complexe
\begin{eqnarray*} 
 \bigoplus_{i=1}^{n-1}W t^{-i} \sta{d}{\rig} \bigoplus_{i=1}^n W t^{-i} dt.
\end{eqnarray*}
 Comme $d(t^{-i})=(-i)t^{-(i+1)}$, le
complexe $R\Gamma(X,C(nD)/C)$ est concentré en degré $1$, où il s'identifie à 
\begin{eqnarray} \label{cokerd}
coker(d)=\bigoplus_{i=1}^n (W/(i-1)W) t^{-i} dt.
\end{eqnarray}
 En outre,
le complexe $R\Gamma(X,C(nD))$ est représenté par le complexe 
$$0 \rig \Ga(X,\OO_X((n-1)D)) \rig \Ga(X,\Omega^1_X(nD)) \rig 0. $$

Finalement, pour calculer la cohomologie de de Rham, on dispose d'un triangle distingué 
\begin{eqnarray}\label{triangle}
\quad R\Gamma(X,C)\rig  R\Gamma(X,C(nD))\rig
R\Gamma(X,C(nD)/C) \sta{+}{\rig} R\Gamma(X, C).
\end{eqnarray}
Le but de cet article est de décrire ce triangle dans le cas des courbes hyperelliptiques, pour $D$ correspondant au point à l'infini
et $n=2g$, où $g$ est le genre de la courbe.
Pour les calculs, nous utiliserons le lemme suivant.
\begin{surlem}\label{sections_saturees} Soient $U$ un ouvert affine de $X$, supposé
irréductible, tel que $U_0$
 soit un ouvert non vide de la fibre spéciale $X_0$, et $\EE$ un faisceau localement
 libre de rang fini sur $X$.
\be \item[(i)]
 Soit $f\in\EE(U)$
tel qu'il existe $l\in\Ne$ tel que $p^lf\in\EE(X)$, alors $f\in\EE(X)$.
\item[(ii)] De plus $\EE(X)\subset \EE(U)$ et le quotient est sans $p$-torsion.
\item[(iii)] Soit $M$ un réseau de $\EE(X)$, tel que $\EE(U)/M$ soit sans
$p$-torsion, alors $M=\EE(X)$.
\ee
\end{surlem}
\begin{proof} Notons $j$ l'inclusion de $U \hrig X$. Remarquons que, puisque $X$ et $X_0$
sont des schémas intègres, les hypothèses impliquent que $$\EE/p\EE \hrig 
j_*\EE_{|U}/p\EE_{|U}.$$ En effet, $X_0$ est recouvert par des ouverts affines sur 
lesquels $\EE/p\EE$ est un $\OO_{X_0}$-module libre, si bien que l'assertion se ramène 
à la même assertion pour $\EE=\OO_{X}$, qui résulte de l'intégrité de $X_0$ et
du fait que $U_0$ est dense dans $X_0$. En passant aux sections globales, et en tenant compte du
fait que l'ouvert $U$ est affine, on a une inclusion  $$r \;\colon \; \EE(X)/p\EE(X) \hrig
\EE(U)/p\EE(U).$$
Soit $f\in\EE(U)$ comme dans l'énoncé, tel que $p^lf\in \EE(X) $ avec $l\geq 1$. 
Nous procédons par récurrence sur $l$, le cas $l=0$ étant évident. Posons
$f'=p^{l-1}f\in\EE(U)$. Alors $h=pf'\in \EE(X)$, et $r(h)=0$, ce qui montre qu'il existe 
$h'\in\EE(X)$ tel que $h=pf'=ph'$, or $\EE(X)$ est sans $p$-torsion car $\EE$ est
localement libre, de sorte que ceci implique que $f'=h'\in\EE(X)$. Ainsi 
$p^{l-1}f\in\EE(X)$ et l'hypothèse de récurrence entraîne que $f\in\EE(X)$.
Le (ii) résulte du fait qu'on a une inclusion $$\EE \hrig 
j_*\EE_{|U},$$ d'où l'inclusion des sections globales. Le fait que le quotient soit sans
$p$-torsion est une simple traduction de (i). Le (iii) vient du fait que l'on a une 
injection $\EE(X)/M \hrig \EE(U)/M$, de sorte que $\EE(X)/M$ est sans torsion et comme 
$M$ est un réseau de $\EE(X)$, on a l'égalité $M=\EE(X)$.
\end{proof}
Nous aurons aussi besoin de quelques considérations de base à propos de l'action de
l'involution hyperelliptique. Soient $V$ une $W$-algèbre, $M$ un $V$-module muni d'une
action $V$-linéaire 
d'une involution $u$ (telle que $u^2=Id$). On pose alors :
$$M^{-}=\{x\in M\,|\,u(x)=-x\}, \quad M^{+}=\{x\in M\,|\,u(x)=x\}.$$
Comme $car(k)\neq 2$, on a un isomorphisme canonique
$$\xymatrix{M \ar@{->}[r]^(.4){\sim}& M^{+}\bigoplus M^{-}\\
          x  \ar@{|->}[r]& (x+u(x),x-u(x)),  }$$
et le foncteur $M\mapsto M^{-}$ est exact, de même que $M\mapsto M^+$. 
Nous en déduisons le fait suivant.
Soit $V\rig V'$ un morphisme de $W$-algèbres. Si $M$ est muni d'une
involution $V$-linéaire, $u$, alors $1\ot u$ est une involution $V'$-linéaire du
$V'$-module $M\ot_V V'$, et
 le morphisme canonique $M^{-}\ot_V V' \rig (M\ot_V V')^{-}$ est un
isomorphisme (resp. avec $M^{+}$).

\section{Cohomologie de de Rham des courbes hyperelliptiques} 
\label{coh_DR_courbes-hyperell}
\subsection{Equations} \label{equations-diff2}
\subsubsection{Données}
Soit $P$ un polynôme unitaire de $W[X]$ de degré impair $d=2g+1$, tel que $P$ est séparable comme polynôme à
coefficients dans $K$ et tel que la classe de $P$ dans $k[X]$ est aussi séparable. On note $P(X)=X^d+\sum_{l=1}^{d-1}a_l X^l$.

Définissons, suivant 7.4.24 de \cite{Liu}, la courbe hyperelliptique $X$ suivante. Soient $[u,v]$ les coordonnées projectives sur
$\mathbf{P}^1_W$, $x=v/u$ une coordonnée fixée sur $D_+(u)$, $x_1=u/v$ une autre fixée sur $D_+(v)$. La courbe $X$ est un revêtement $h$ de
degré $2$ de $\mathbf{P}^1_W$, réunion de deux ouverts affines $U=h^{-1}(D_+(u))$ et $W=h^{-1}(D_+(v))$ donnés par $$ U={\rm
Spec}W[x,y]/y^2-P(x),$$ $$   V={\rm Spec}W[x_1,t]/t^2-P_1(x_1),$$ où l'on a posé $P_1(x_1)=x_1^{d+1}P(1/x_1)=x_1 R_1(x_1)$, $t=y/x^{g+1}$.
 
Les conditions sur $P$ assurent que la courbe $X$ ainsi définie est lisse et irréductible.
De même, les fibres génériques et spéciales sont lisses, connexes et géométriquement
connexes. Nous pouvons donc appliquer les résultats de ~\ref{complexesDR}. On note $X_0$ la fibre spéciale de $X$ et $X_K$ la
fibre générique de $X$. 

On vérifie facilement que comme dans le cas d'un corps le lieu de ramification de $h$ est $ V(P)\bigcup V(x_1)$. 

Dans la suite nous noterons $A$ (resp. $A'$)la W-algèbre $W[x,y]/y^2-P(x)$, qui est un $W[x]$-module libre de rang $2$ de base $1$, $y$.

L'application définie sur $U$ par $\iota(x,y)=(x,-y)$ et sur $V$ par $\iota(x_1,t)=(x_1,-t)$ définit une involution de $X$ dont les points
fixes sont le lieu de ramification de $h$.
\subsubsection{Situation en l'infini} Par convention, le point à l'infini noté $\inft$ de $\mathbf{P}^1_W$ correspond à $x_1=0$, et le point à
l'infini aussi noté $\inft$ de $X$ est le point de $V$ vérifiant $x_1=0$.  Remarquons que $R_1(x_1)=1+\sum_{l=1}^d a_{d-l}x_1^l$ est de
valuation $0$ en $x_1$, en d'autres termes $P'_1$ est inversible de l'anneau local $\OO_{X,\inft}$. Un générateur du faisceau localement libre
$\Omega^1_V$ est $dx_1/t$ d'après 6.4.14 de \cite{Liu}.  Dans le module
$\Omega^1_{V,\inft}$ on a la relation $dx_1/2t=(P'(x_1)^{-1})dt$ de
sorte que $t$ est aussi un paramètre à l'infini sur $X$.
Donnons quelques formules
\begin{equation}\label{form1}
\begin{split} 
\frac{dx}{2y}=-\frac{x_1^{g-1}dx_1}{2t}\,\in \Omega^1_V,\\
x =x_1^{-1} = \frac{1}{t^2}R_1(x_1),\\
R_1(x_1) = x_1^{2g+1}P(x).
\end{split}
\end{equation}

 Commençons par calculer les sections globales de
$\Omega^1_X$, $\Omega^1_X(2g\inft)$ et préciser l'action de l'involution
 hyperelliptique $\iota$ sur ces modules. 
\subsection{Calculs de sections globales}
\begin{prop} \label{sect_glob_omega}
Le $W$-module $H^0(X,\Omega^1_X)$
 est libre de rang $g$ de base
$$\omega_i=\frac{x^idx}{y}=-\frac{x_1^{g-1-i}dx_1}{t}, \, 0\leq i \leq g-1.$$
\end{prop}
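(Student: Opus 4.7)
Le plan est double: vérifier que chaque $\omega_i$ est une section globale de $\Omega^1_X$, puis montrer que ces $g$ formes engendrent tout $H^0(X,\Omega^1_X)$.

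Pour la régularité sur $U$, la relation $2y\,dy=P'(x)\,dx$ donne $dx/y=2\,dy/P'(x)$, ce qui rend $dx/y$ régulier même aux points de ramification (où $y=0$ mais $P'(x)$ est inversible par séparabilité de $P$); ainsi $\omega_i=x^idx/y\in\Omega^1_A$. Sur $V$, la formule $\omega_i=-x_1^{g-1-i}dx_1/t$ de~\eqref{form1} et la condition $0\leq i\leq g-1$ donnent un polynôme en $x_1$ multiplié par le générateur $dx_1/t$ de $\Omega^1_V$, d'où la régularité sur $V$. Comme $dx/y$ ne s'annule nulle part sur $U$ (multiple inversible de $dx$ hors ramification, et de $dy$ aux points de ramification), elle trivialise $\Omega^1_U$, d'où $\Omega^1_X(U)=A\cdot dx/y$; utilisant $A=W[x]\oplus yW[x]$, toute section sur $U$ s'écrit de façon unique $\alpha=f_0(x)\,dx/y+f_1(x)\,dx$ avec $f_0,f_1\in W[x]$.

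Pour caractériser celles qui se prolongent à $V$, on passe aux coordonnées $x_1=1/x$, $t=y/x^{g+1}$. Comme $t^2=x_1R_1(x_1)$ avec $R_1(0)$ une unité et que $t$ est un paramètre en $\inft$, on a $v_\infty(x_1)=2$ et $v_\infty(t)=1$, d'où $v_\infty(x^n\,dx/y)=2(g-1-n)$ (paire) et $v_\infty(x^m\,dx)=-2m-3$ (impaire). Ces deux familles de valuations étant disjointes, les monômes apparaissant dans $\alpha$ ont tous des valuations distinctes à l'infini, et la régularité de $\alpha$ équivaut à celle de chaque monôme. Or $2(g-1-n)\geq 0$ équivaut à $n\leq g-1$, et $-2m-3<0$ pour tout $m\geq 0$; donc $\deg f_0\leq g-1$ et $f_1=0$, ce qui donne $H^0(X,\Omega^1_X)=\bigoplus_{i=0}^{g-1}W\omega_i$.

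Une alternative serait d'utiliser le lemme~\ref{sections_saturees}(iii) en posant $M=\bigoplus_{i=0}^{g-1}W\omega_i$: la proposition~\ref{suite-spectrale} garantit que $H^0(X,\Omega^1_X)$ est libre de rang $g$, donc $M$ est un réseau dès que les $\omega_i$ sont indépendantes, et la décomposition précédente de $\Omega^1_A$ montre que $\Omega^1_A/M$ est un $W$-module libre, donc sans $p$-torsion. L'obstacle principal sera le calcul précis des valuations à l'infini, qui repose sur l'égalité $2v_\infty(t)=v_\infty(x_1)$ déduite de $t^2=x_1R_1(x_1)$.
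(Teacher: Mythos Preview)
Your proof is correct. Your primary argument---computing the $t$-adic valuation at $\inft$ of each monomial $x^n\,dx/y$ and $x^m\,dx$ and observing that these valuations are pairwise distinct (even versus odd), so that regularity of a finite $W$-linear combination is equivalent to regularity of each term---is a direct computation that determines $H^0(X,\Omega^1_X)$ without any a~priori knowledge of its rank. The paper instead argues indirectly: it invokes Liu for the generators of $\Omega^1_U$ and $\Omega^1_V$, lets $M=\bigoplus_{i=0}^{g-1}W\omega_i$, notes that $\Omega^1_X(U)/M$ is $W$-free on the explicit basis $\{x^j\omega_0\}_{j\ge g}\cup\{x^jy\,\omega_0\}_{j\ge 0}$, and then applies Lemma~\ref{sections_saturees}(iii). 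This is exactly the ``alternative'' you sketch in your last paragraph. Your valuation route is more self-contained; the paper's saturation route is the one that gets recycled verbatim for the later computations of $H^0(X,\OO_X(n\inft))$ and $H^0(X,\Omega^1_X(2g\inft))^{-}$, which is presumably why they set it up this way. One small point worth making explicit in your write-up: the passage from ``distinct valuations'' to ``regularity is termwise'' uses that the leading $t$-coefficient of each monomial is a nonzero element of $W$ (in fact $-2$, a unit since $p\neq 2$), so that no cancellation can occur in the lowest-order term.
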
 
Sur un corps, cela résulte de 7.4.26 de \cite{Liu}. D'après 6.4.14 de
\cite{Liu}, le faisceau $\Omega^1_U$ (resp. $\Omega^1_V$) est libre, engendré par $\omega_0$ 
(resp.  $\omega_{g-1}$) de sorte que les éléments
considérés sont clairement des éléments linéairement indépendants de $H^0(X,\Omega^1_X)$.
Soit maintenant $M$ le sous-module 
des sections globales engendré par les $\omega_i$
pour $0\leq i \leq g-1$, qui est a priori un réseau de 
$H^0(X,\Omega^1_X)$. Alors $\Omega^1_X(U)/M$ est libre sur $W$, de base 
les $x^j \omega_0$ pour $j\geq g$ et les  $x^jy \omega_0$ pour $j\geq 0$, si bien que 
$M=H^0(X,\Omega^1_X)$ d'après le lemme ~\ref{sections_saturees}.

En particulier, $\iota$ agit par $-Id$ sur les sections globales de $\Omega^1_X$. On en
déduit le
\begin{cor} \label{actionDR}
L'involution $\iota$ agit par $-Id$ sur $H^1_{DR}(X)$.  \label{subsubsection-iota} \end{cor}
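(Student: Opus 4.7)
The plan is to deduce the corollary from the $\iota$-equivariant short exact sequence coming from the degeneration of the Hodge-to-de~Rham spectral sequence, which was established in Proposition~\ref{suite-spectrale}:
$$0\rig H^0(X,\Omega^1_X)\rig H^1_{DR}(X)\rig H^1(X,\OO_X)\rig 0.$$
Because the functor $M\mapsto M^-$ (the $(-1)$-eigenspace under an involution) is exact, as recalled in the end of Section~\ref{complexesDR}, it suffices to check that $\iota$ acts by $-Id$ both on the sub $H^0(X,\Omega^1_X)$ and on the quotient $H^1(X,\OO_X)$. The previous proposition and the remark immediately following it already give the first claim, since each generator $\omega_i=x^idx/y$ satisfies $\iota^*\omega_i=-\omega_i$.

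The remaining point is thus to prove that $\iota$ acts by $-Id$ on $H^1(X,\OO_X)$. I would argue this via Serre duality: the perfect pairing
$$H^0(X,\Omega^1_X)\ot H^1(X,\OO_X) \rig H^1(X,\Omega^1_X)\simeq W$$
is $\iota$-equivariant, and since $\iota$ is an automorphism of degree one of a smooth proper connected curve, it acts trivially on the one-dimensional top cohomology $H^1(X,\Omega^1_X)\simeq W$. Combined with $\iota=-Id$ on the first factor, equivariance of a perfect pairing forces $\iota=-Id$ on the second.

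Alternatively, and perhaps more elementarily, one can compute $H^1(X,\OO_X)$ directly by \v{C}ech cohomology with respect to the cover $\{U,V\}$ of $X$ introduced in~\ref{equations-diff2}: using the $W$-bases of $\OO_X(U)$, $\OO_X(V)$ and $\OO_X(U\cap V)$ given by monomials in $x,y$ and their localizations, one finds that $H^1(X,\OO_X)$ is free on the classes $y/x^i$ for $1\leq i \leq g$, and each such class is manifestly sent to its opposite by $\iota$ since $\iota(y)=-y$. Applying $(-)^-$ to the above short exact sequence then yields $H^1_{DR}(X)^-=H^1_{DR}(X)$, which is the assertion of the corollary. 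The only genuinely non-formal step is the verification of the action on $H^1(X,\Omega^1_X)\simeq W$ in the Serre-duality approach, but the \v{C}ech computation bypasses it altogether.
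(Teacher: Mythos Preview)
Your proof is correct and follows the same route as the paper: the short exact sequence from Hodge--de~Rham degeneration, the explicit action on $H^0(X,\Omega^1_X)$ from Proposition~\ref{sect_glob_omega}, and Serre duality for $H^1(X,\OO_X)$. Your version is slightly more explicit in justifying why duality forces $\iota=-Id$ on $H^1(X,\OO_X)$ (via the triviality of the action on $H^1(X,\Omega^1_X)$), and your alternative \v{C}ech computation is also valid---indeed it is carried out later in the paper over $k$ in Proposition~\ref{base_gr0}.
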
 
\begin{proof} D'après la dégénérescence de la suite spectrale de Hodge vers de Rham, 
on a une suite exacte courte $0\rig H^0(X,\Omega^1_X) \rig
H^1_{DR}(X)\rig  H^1(X,\OO_X)\rig 0.$ Nous venons de voir que $\iota$ agit par 
$-Id$ sur $H^0(X,\Omega^1_X)$. Le module $ H^1(X,\OO_X)$
est le dual de Poincaré de $H^0(X,\Omega^1_X)$ et donc $\iota$ agit aussi par $-Id$ sur ce module. 
Il s'ensuit que $\iota$ agit par $-Id$ sur $H^1_{DR}(X)$.
\end{proof}
\begin{prop}  \label{deg_suite_spec_tw}
 \be
\item[(i)] Soit $n\in\Ne$ tel que $n\geq 2g-1$, 
le $W$-module $H^0(X,\OO_X(n\inft))$ est libre de base les $x^i$ pour
$i\in\{0,\ldots,[n/2]\}$ et les $yx^i$ pour $i\in\{0,\ldots,[(n-1)/2]-g\}$.
\item[(ii)] En particulier, le $W$-module $H^0(X,\OO_X((2g-1)\inft))$ 
est libre de base les $x^i$ pour
$i\in\{0,\ldots,g-1\}$ et $$ H^0(X,\OO_X((2g-1)\inft)^{-}=0.$$
\ee
\end{prop}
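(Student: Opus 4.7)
Le plan se d\'ecompose en trois \'etapes. D'abord, je calcule l'ordre du p\^ole \`a l'infini des fonctions candidates. D'apr\`es les formules \eqref{form1}, on a $x = x_1^{-1} = t^{-2} R_1(x_1)$ avec $R_1(0)=1$, donc $R_1$ est inversible dans $\OO_{X,\inft}$ et $x$ admet un p\^ole d'ordre exactement $2$ \`a l'infini. De m\^eme, $y = x^{g+1} t = t^{-(2g+1)} R_1(x_1)^{g+1}$ admet un p\^ole d'ordre $2g+1$. Par cons\'equent, $x^i$ appartient \`a $H^0(X,\OO_X(n\inft))$ si et seulement si $2i \leq n$, c'est-\`a-dire $i \leq [n/2]$ ; et $yx^i$ si et seulement si $2i + 2g + 1 \leq n$, c'est-\`a-dire $i \leq [(n-1)/2]-g$, ce qui correspond pr\'ecis\'ement aux domaines d'indices \'enonc\'es.

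Ensuite, je souhaite appliquer le point (iii) du lemme \ref{sections_saturees}. Comme $\inft \notin U$, le faisceau $\OO_X(n\inft)$ restreint \`a $U$ co\"incide avec $\OO_U$, de sorte que $\OO_X(n\inft)(U) = A = W[x,y]/(y^2 - P(x))$, qui admet pour base libre sur $W$ les $x^i$ et $yx^i$ avec $i \geq 0$. Si $M$ d\'esigne le sous-$W$-module engendr\'e par les sections propos\'ees, le quotient $A/M$ est alors $W$-libre de base les mon\^omes compl\'ementaires $x^i$ pour $i > [n/2]$ et $yx^i$ pour $i > [(n-1)/2]-g$, donc en particulier sans $p$-torsion.

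Pour conclure (i), il reste \`a s'assurer que $M$ est un r\'eseau de $H^0(X,\OO_X(n\inft))$, ce qui se fait par un argument de dimension. Riemann--Roch sur la courbe lisse $X_K$ de genre $g$ donne $\dim_K H^0(X_K,\OO_{X_K}(n\inft)) = n+1-g$ pour $n \geq 2g-1$, et l'on v\'erifie ais\'ement en distinguant la parit\'e de $n$ que le nombre total de g\'en\'erateurs propos\'es vaut $([n/2]+1) + ([(n-1)/2]-g+1) = n+1-g$. Ces sections \'etant $K$-lin\'eairement ind\'ependantes dans la fibre g\'en\'erique, elles en forment une $K$-base, donc $M$ est bien un r\'eseau, et le lemme \ref{sections_saturees}(iii) fournit l'\'egalit\'e $M = H^0(X,\OO_X(n\inft))$.

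Pour (ii), on sp\'ecialise \`a $n = 2g-1$ : alors $[n/2] = g-1$ et $[(n-1)/2]-g = -1$, de sorte que la seconde famille est vide et la base se r\'eduit aux $1, x, \ldots, x^{g-1}$. Ces \'el\'ements \'etant tous fix\'es par l'involution $\iota$ (qui laisse $x$ invariant), la partie $-$ pour $\iota$ est nulle. L'\'etape principale sera le comptage de Riemann--Roch qui identifie $M$ comme r\'eseau dans la fibre g\'en\'erique ; tout le reste est une application formelle du lemme de saturation.
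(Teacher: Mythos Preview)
Votre d\'emonstration est correcte et suit essentiellement la m\^eme strat\'egie que celle du papier~: v\'erifier que les \'el\'ements propos\'es sont des sections globales via les formules~\eqref{form1}, compter par Riemann--Roch, puis appliquer le lemme~\ref{sections_saturees} en exploitant le fait que $\OO_X(n\inft)(U)/M$ est sans $p$-torsion. Vous \^etes simplement un peu plus explicite sur la v\'erification que $M$ est un r\'eseau (ind\'ependance lin\'eaire sur $K$), l\`a o\`u le papier invoque directement la libert\'e du $H^0$ et son rang.
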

\begin{proof} Comme le faisceau $\OO_X(n\inft)$ est localement libre, le module 
$H^0(X,\OO_X((2g-1)\inft))$ est un $W$-module libre. Pour $n\geq 2g-1$, la formule de
Riemann Roch donne qu'il est de rang $n+1-g=[n/2]+1+[(n-1)/2]-g+1$, qui est le nombre
d'éléments considérés. Ces éléments sont bien des sections globales de
$H^0(X,\OO_X(n\inft))$ 
puisque, en vertu de ~\ref{form1}, 
$$\begin{cases} \mbox{si } 0\leq i \leq \left[\frac{n}{2}\right],\,
x^i=\frac{R_1(x_1)^i}{t^{2i}}\\
\mbox{si } 0 \leq i \leq \left[\frac{n}{2}\right] -g-1,\,
x^iy=\frac{R_1(x_1)^{g+1+i}}{t^{2(i+g)+1}}.
\end{cases}$$
Notons alors $M$ le sous $W$-module de $H^0(X,\OO_X(n\inft))$ engendré par ces sections.
Comme $\OO_X(U)/M$ est un $W$-module libre, on voit grâce à ~\ref{sections_saturees}, que 
$M=H^0(X,\OO_X(n\inft))$. L'assertion (ii) n'est qu'un cas particulier de (i).
\end{proof}
Nous en déduisons le corollaire.
\begin{cor} $$\forall 0 \leq i \leq g-2,\, x^idx\in\Ga(X,\Omega^1_X(2g\infty))^+.$$
\end{cor}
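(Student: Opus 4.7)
Il y a deux choses \`a \'etablir : d'une part que $x^i dx$ appartient \`a $\Gamma(X,\Omega^1_X(2g\infty))$, d'autre part que cette forme est fix\'ee par l'involution hyperelliptique $\iota$.

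L'invariance sous $\iota$ est imm\'ediate : $\iota$ op\`ere sur l'ouvert $U$ par $(x,y)\mapsto(x,-y)$, et fixe donc $x$ ainsi que $dx$. Reste \`a contr\^oler le p\^ole \`a l'infini.

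Sur $U$ la forme $x^i dx$ est visiblement r\'eguli\`ere, il suffit donc d'analyser la situation en l'unique point $\infty \in V \setminus U$. Dans la carte $V$, la substitution $x = 1/x_1$ donne $dx = -dx_1/x_1^2$, d'o\`u $x^i dx = -dx_1/x_1^{i+2}$. En utilisant la relation $\omega_{g-1} = -dx_1/t$ tir\'ee de (\ref{form1}), on r\'e\'ecrit ceci sous la forme
\[
x^i dx = \frac{t\,\omega_{g-1}}{x_1^{i+2}}.
\]

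Il ne reste qu'\`a calculer la valuation en $\infty$, en prenant $t$ comme param\`etre local. La relation $t^2 = x_1 R_1(x_1)$ avec $R_1(0) = 1$ (cf.~\ref{form1}) montre que $R_1(x_1)$ est inversible dans $\OO_{X,\infty}$, d'o\`u $v_\infty(x_1) = 2$ et $v_\infty(t) = 1$; par ailleurs $\omega_{g-1}$ engendre le faisceau localement libre $\Omega^1_V$ et v\'erifie donc $v_\infty(\omega_{g-1}) = 0$. Il s'ensuit que $v_\infty(x^i dx) = 1 - 2(i+2) = -2i-3$, qui est $\geq -(2g-1) > -2g$ d\`es que $0 \leq i \leq g-2$. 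On conclut que $x^i dx \in \Gamma(X,\Omega^1_X((2g-1)\infty)) \subset \Gamma(X,\Omega^1_X(2g\infty))^+$. Le seul point \`a v\'erifier avec soin dans cette preuve est ce d\'ecompte de valuation en l'infini; tout le reste est une v\'erification formelle.
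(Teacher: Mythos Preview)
Ton argument est correct, mais il emprunte une voie diff\'erente de celle du papier. Le papier proc\`ede ainsi : par la proposition~\ref{deg_suite_spec_tw}(ii), les $x^j$ pour $0\leq j\leq g-1$ sont des sections globales de $\OO_X((2g-1)\infty)$, invariantes par $\iota$ ; en d\'erivant, on obtient $jx^{j-1}dx\in\Gamma(X,\Omega^1_X(2g\infty))^+$. Il reste alors \`a se d\'ebarrasser du coefficient $j$, qui peut \^etre divisible par $p$ : c'est le r\^ole du lemme~\ref{sections_saturees} (absence de $p$-torsion dans $\EE(U)/\EE(X)$), appliqu\'e \`a $\EE=\Omega^1_X(2g\infty)$.

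Ta d\'emarche, elle, calcule directement l'ordre du p\^ole en $\infty$ via l'uniformisante $t$, et trouve $v_\infty(x^idx)=-(2i+3)\geq -(2g-1)$. Cela \'evite compl\`etement le recours au lemme de saturation, et donne au passage l'inclusion l\'eg\`erement plus fine $x^idx\in\Gamma(X,\Omega^1_X((2g-1)\infty))$. L'approche du papier r\'eutilise avec \'el\'egance des r\'esultats d\'ej\`a \'etablis ; la tienne est plus \'el\'ementaire et autonome. Un point \`a soigner dans ta r\'edaction : pr\'eciser que $v_\infty$ d\'esigne la valuation le long du diviseur $\infty$ (c'est-\`a-dire dans l'anneau de valuation discr\`ete $\OO_{X,\eta_\infty}$ au point g\'en\'erique de ce diviseur), puis observer que l'\'ecriture $x^idx = t\,x_1^{g-i-2}R_1(x_1)^g\cdot(t^{-2g}\omega_{g-1})$ avec $g-i-2\geq 0$ montre bien que la section s'\'etend \`a $V$ tout entier, et pas seulement au point g\'en\'erique de $\infty$.
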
 
\begin{proof} En dérivant les sections globales de $\OO_X((2g-1)\inft)$, on trouve des
sections globales de $\Omega^1_X(2g\infty)$, lesquelles sont invariantes sous l'action de
l'involution $\iota$. De plus, en appliquant le lemme~\ref{sections_saturees}, on voit que si $ix^{i-1}dx\in
\Ga(X,\Omega^1_X(2g\infty))$, alors $x^{i-1}dx \in\Ga(X,\Omega^1_X(2g\infty))$.
\end{proof}

\noindent{Remarque.} Au passage, on constate que si la suite spectrale associée au
complexe filtré $C(2g\inft)$ ne dégénère pas, la suite spectrale obtenue à partir de 
celle-ci après application de $\iota=-Id$ dégénère car $car(k)\neq 2$. L'aboutissement de cette suite
spectrale est la partie de $R\Gamma(X,C(2g\inft))$ sur laquelle $\iota$ agit par $-Id$.

\begin{prop}\label{desc_base_inf} Le $W$-module
$H^0(X,\Omega^1_X(2g\infty))^{-}$ est libre de rang $2g$ de base
$$\omega_i=\frac{x^idx}{y}=-x_1^{2g-1-i}R_1(x_1)^g\frac{1}{t^{2g}}\frac{dx_1}{t}, \, 0\leq i \leq 2g-1.$$
\end{prop}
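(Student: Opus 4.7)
The plan is to follow the pattern of Propositions~\ref{sect_glob_omega} and~\ref{deg_suite_spec_tw}, exploiting Lemma~\ref{sections_saturees}. The first task is to reconcile the two expressions for $\omega_i$ given in the statement. Using the formulas of~\ref{form1}, the identity $t^2 = x_1 R_1(x_1)$ combined with $x = 1/x_1$ gives $R_1(x_1) = t^2 x$, hence $R_1(x_1)^g / t^{2g} = x^g = x_1^{-g}$; together with $dx/y = -x_1^{g-1} dx_1/t$ this yields $x^i dx/y = -x_1^{g-1-i} dx_1/t = -x_1^{2g-1-i} R_1(x_1)^g t^{-2g} dx_1/t$, matching the stated formula.

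Next, I would check that $\omega_i \in H^0(X, \Omega^1_X(2g\infty))^-$ for $0 \le i \le 2g-1$. On the open $U = X \setminus \{\infty\}$, the module $\Omega^1_U$ is free on $dx/y$ (by 6.4.14 of \cite{Liu}), so $\omega_i = x^i \cdot dx/y$ lies in $\Omega^1_U$; since $\iota(x) = x$ and $\iota(y) = -y$, we have $\iota(\omega_i) = -\omega_i$. At infinity, using $v_\infty(x_1) = 2$ and that $dx_1/t$ generates $\Omega^1_V$ at $\infty$, one computes $v_\infty(\omega_i) = 2(g-1-i)$, which is $\ge -2g$ exactly when $i \le 2g-1$. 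Linear independence is immediate, since $1, x, \ldots, x^{2g-1}$ are $W$-independent in $W[x]$ and $dx/y$ is a free generator of $\Omega^1_U$.

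Third, I would show the $\omega_i$ generate $H^0(X, \Omega^1_X(2g\infty))^-$. Since $\infty \notin U$, the restriction map sends any $\eta \in H^0(X, \Omega^1_X(2g\infty))$ into $\Omega^1_U = A \cdot dx/y$. The decomposition $A = W[x] \oplus W[x] \cdot y$ combined with $\iota(dx/y) = -dx/y$ identifies the minus eigenspace on $U$ as $\Omega^1_U^- = W[x] \cdot dx/y$. Hence any $\eta \in H^0(X, \Omega^1_X(2g\infty))^-$ is of the form $f(x) dx/y$ on $U$, for some $f \in W[x]$; the pole bound $v_\infty(\eta) \ge -2g$ then forces $\deg f \le 2g-1$, so $\eta$ belongs to the $W$-span $M$ of $\omega_0, \ldots, \omega_{2g-1}$.

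The main task is the bookkeeping of valuations at the point at infinity, but this is straightforward given the formulas of~\ref{form1}. As an alternative to the direct generation argument, one could finish via Lemma~\ref{sections_saturees}: the quotient $\Omega^1_U^-/M = (W[x]/W\langle 1, x, \ldots, x^{2g-1}\rangle) \cdot dx/y$ is a free $W$-module, hence $p$-torsion free, and $M$ is a lattice of $H^0(X, \Omega^1_X(2g\infty))^-$, so the lemma forces $M = H^0(X, \Omega^1_X(2g\infty))^-$.
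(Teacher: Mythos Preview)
Your direct argument is correct and more elementary than the paper's. The paper proceeds differently: it first computes the rank of $H^0(X,\Omega^1_X(2g\infty))^{-}$ via the long exact sequence attached to
\[
0 \rig \Omega^1_X \rig \Omega^1_X(2g\infty) \rig t^{-2g}\OO_{X,\infty}dt/\OO_{X,\infty}dt \rig 0,
\]
passing to the $(-)$-eigenspace to obtain the surjection $v$ of~(\ref{appli_v}) onto $\bigoplus_{i=1}^{g} W\,t^{-2i}dt$ (the last term $H^1(X,\Omega^1_X)^{-}$ vanishes because $\iota$ acts trivially on $H^1(X,\Omega^1_X)\simeq W$). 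This gives rank $2g$, and then Lemma~\ref{sections_saturees} finishes exactly as in your ``alternative''. By contrast, you bypass the rank computation entirely: restricting to $U$ identifies the $(-)$-part as $W[x]\cdot dx/y$, and the pole bound at $\infty$ (which works because the $t^{2(g-1-n)}$-coefficient in $W[[t]]$ is the nonzero leading coefficient $c_n\in W$) forces $\deg f\le 2g-1$. This is cleaner for the proposition in isolation. Note, however, that the paper's route is not wasted effort: the map $v$ it introduces is used immediately afterward in Corollary~\ref{suite_cark_diff2} and in the explicit description of the basis $(\omega'_{g+i})$ of $H^1_{DR}(X)$, so in context that exact sequence has to appear somewhere. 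Your ``alternative'' via Lemma~\ref{sections_saturees} is incomplete as written, since calling $M$ a lattice presupposes the rank equality you have not established by that route; but since your direct argument already gives the result, this is harmless.
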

\begin{proof} Les éléments considérés sont clairement des sections globales de $\Omega^1_X(2g\infty))$, qui est un $W$-module sans torsion,
puisque le faisceau $\Omega^1_X(2g\infty))$ est localement libre de rang $1$. Calculons le rang de ce module. En passant à la suite exacte
longue de cohomologie de la suite exacte courte $$0\rig \Omega^1_X \rig \Omega^1_X(2g\infty) \rig
t^{-2g}\OO_{X,\inft}dt\left/\OO_{X,\inft}dt\right. \rig 0 ,$$ et en appliquant le foncteur exact $\iota=-Id$, on trouve la suite exacte
\begin{gather}\label{appli_v}
0\rig H^0(X,\Omega^1_X) \rig H^0(X,\Omega^1_X(2g\infty))^{-}\sta{v}{\rig} \left(\bigoplus_{j=1}^{2g}W\,t^{-j} dt\right)^{-}\rig
H^1(X,\Omega^1_X)^{-}.
\end{gather}
 Le terme $H^1(X,\Omega^1_X)$ est dual de $H^0(X,\OO_X)=W$ et donc $\iota$ agit trivialement sur ce module si bien que
le dernier terme de la suite est nul. Le troisième terme de cette suite exacte est $\bigoplus_{i=1}^{g}W.t^{-2i}dt$ puisque
$\iota(dt)=-dt$. Le rang cherché est donc $2g$. 

Notons $M$ le sous-module engendré par les éléments $\omega_i$, ce module est de rang $2g$ car les $\omega_i$ sont linéairement indépendants
sur $W$ dans $\Omega^1_X(U)$.  Le même raisonnement que précédemment montre que $M=H^0(X,\Omega^1_X(2g\infty))$ et finalement que le rang de
$H^0(X,\Omega^1_X(2g\infty))$ est $2g$.  \end{proof}

 Remarquons que $v(\omega_i)=0$ si $ 0\leq i \leq g-1$.

Dans tous les cas portant sur la caractéristique de $k$, le module 
$M=H^0(X,\Omega^1_{X}(2g\infty))$ est libre de rang $3g-1$ par le théorème de 
Riemann-Roch et l'énoncé qui suit se démontre exactement de la même façon
que le précédent.
\begin{prop} \label{desc2_base_inf_d2}Si $g\geq 2$ 
$$H^0(X,\Omega^1_{X}(2g\infty))=\bigoplus_{i=0}^{2g-1}W\cdot\frac{x^i dx}{y}
\bigoplus \bigoplus_{i=0}^{g-2}W\cdot x^i dx.$$
\end{prop}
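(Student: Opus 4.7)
The plan is to repeat the strategy of Proposition~\ref{desc_base_inf} without needing to decompose under the hyperelliptic involution, since Riemann--Roch directly provides the correct rank. The proof then reduces to: (a) compute the rank, (b) check that the proposed elements are global sections, (c) verify linear independence, and (d) invoke the saturation lemma.

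First I would compute the rank. The invertible sheaf $\Omega^1_X(2g\infty)$ has degree $(2g-2)+2g=4g-2$, strictly greater than $2g-2$, so Proposition~\ref{acyc} gives $H^1(X,\Omega^1_X(2g\infty))=0$ and shows that $H^0(X,\Omega^1_X(2g\infty))$ is a free $W$-module. Its rank, computed on the generic fibre by Riemann--Roch, is $(4g-2)+1-g=3g-1$, which matches the total $2g+(g-1)=3g-1$ of listed generators.

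Second, I would verify that the listed elements really do lie in $H^0(X,\Omega^1_X(2g\infty))$. The forms $\omega_i=x^i dx/y$ for $0\le i\le 2g-1$ were treated in Proposition~\ref{desc_base_inf}. For the new elements $x^i dx$ with $0\le i\le g-2$, I would use the local parameter $t$ at $\infty$: from the formulas~(\ref{form1}), $x=R_1(x_1)/t^2$ with $R_1$ a unit at $\infty$, so the $t$-adic valuation of $x$ at $\infty$ is $-2$, hence that of $dx$ is $-3$, and therefore that of $x^i dx$ is $-2i-3\ge -2g$ precisely when $i\le g-2$.

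Third, I would establish $W$-linear independence by restricting to the affine open $U=\spec W[x,y]/(y^2-P(x))$ and observing that in the free $W[x]$-module $\Omega^1_U$, generated by $dx$ and $y\,dx$ over $W[x]$, these $3g-1$ elements restrict to distinct monomial multiples of $dx$ and $dx/y$. Letting $M$ denote the sub-$W$-module they span, I would then apply Lemma~\ref{sections_saturees}(iii): the complement of $M$ in $\Omega^1_X(U)$ admits an explicit $W$-basis consisting of the leftover monomials (namely $x^i dx$ for $i\ge g-1$ and $x^i dx/y$ for $i\ge 2g$), so $\Omega^1_X(U)/M$ is $W$-free, in particular $p$-torsion free; since $M$ is a lattice inside $H^0(X,\Omega^1_X(2g\infty))$ of the right rank $3g-1$, the lemma forces $M=H^0(X,\Omega^1_X(2g\infty))$.

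The main obstacle is really just bookkeeping at infinity: one must not confuse the valuations of $x_1$ (order $2$) and of $t$ (order $1$) and must correctly compute $v_\infty(dx)=-3$, which is exactly what pins down the upper bound $i\le g-2$ in the second sum.
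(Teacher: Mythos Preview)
Your argument is correct and matches the paper's own proof exactly: the paper simply remarks that this proposition ``se d\'emontre exactement de la m\^eme fa\c{c}on que le pr\'ec\'edent'', i.e., rank $3g-1$ by Riemann--Roch and Proposition~\ref{acyc}, then Lemma~\ref{sections_saturees} once one checks that the listed elements are global sections with $p$-torsion-free complement in $\Omega^1_X(U)$. One small slip to fix: the $W[x]$-basis of $\Omega^1_U$ is $\{dx/y,\ dx\}$ (since $dx/y$ generates $\Omega^1_U$ over $\OO_U$ and $y\cdot dx/y=dx$), not $\{dx,\ y\,dx\}$; you in fact use the correct basis in the very next clause, so this does not affect the rest of the argument.
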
 
Pour une courbe elliptique, seul le premier terme apparaît. On remarquera que les éléments 
$x^i dx$ sont invariants par l'involution hyperelliptique $\iota$.
\subsection{Calcul explicite de la cohomologie de de Rham des courbes hyperelliptiques}
\subsubsection{Calcul du passage au complété de l'anneau local en l'infini}
\label{complete_infini}
Décrivons d'abord explicitement l'homomorphisme d'anneaux locaux
$$\OO_{X_\inft}\rig W[[t]].$$ 
 Notons $C=\OO_{X_\inft}$ l'anneau local en $\infty$ de la courbe $X$, de sorte 
que  
$$ C=\left[ W[x_1,t]\left/ t^2-x_1R_1(x_1)\right.\right]_{(pW[x_1,t]+tW[x_1,t])},$$
c'est-à-dire que $C$ est obtenu en localisant l'algèbre $W[x_1,t]\left/ t^2-x_1R_1(x_1)\right.$ par l'idéal 
maximal engendré par $t$ et $p$.
Le complété de $\what{C}$ pour la topologie $t$-adique est isomorphe à $W[[t]]$ via une 
application $\mu$ : $W[[t]]\rig \what{C}$. On note 
$\lam$ l'application $C\rig W[[t]]$ qui s'obtient comme $\mu^{-1}\circ Can$ où $Can$ est 
l'application canonique $C \rig \what{C}$, et par abus de notation on 
continue de noter $x_1$ l'élément $\lam(x_1)$. Cherchons $x_1$ sous la forme $x_1(t)=t^2U(t)$ avec 
$U(t)=\sum_i q_i t^i\in W[[t]]$. L'élément $U$ doit vérifier l'équation
\begin{equation*}(F)\quad t^2=(t^2U(t))\left[R_1(t^2U(t))\right],\end{equation*} c'est-à-dire
$1=U(t)R_1(t^2U(t))$. Notons $f(X)=X(R_1(t^2X))-1\in W[[t]][X]$, 
l'image de $f$ dans le corps résiduel $k$ de $W[[t]]$ est $R_1(0)X-1=X-1$ dont $1$ est une 
racine simple de sorte qu'on 
peut appliquer le lemme de Hensel et voir qu'il existe un unique 
$U(t)$ de W[[t]] (inversible) satisfaisant $f(U(t))=0$, et donc aussi un unique 
$x_1(t)=t^2U(t)\in W[[t]]$ vérifiant l'équation $(F)$. 

Remarquons de plus que $P'_1(x_1)\in 1+tW[[t]]$ est inversible dans 
$W[[t]]$, dont l'inverse sera noté $V(t)$. On a alors
$$\frac{dx_1}{2t}=\frac{dt}{P'_1(x_1)},$$
soit encore
$$\frac{dx_1}{2t}=V(t) dt.$$

Avec ces notations, du fait que $R_1(x_1)=U(t)^{-1}$ 
on voit que dans ${\what{\Omega}}^1_{X,\inft}=W[[t]]\ot 
{\what{\Omega}}^1_{X,\inft}$, on a l'égalité, en reprenant les notations 
de ~\ref{desc_base_inf}
$$\omega_i=-2t^{2(g-1-i)}U(t)^{g-1-i}V(t)dt.$$
Notons
\begin{eqnarray}\label{notation-u}\forall \, 0\leq i \leq g-1,\, 
2U(t)^{-1-i}V(t)&=&2+\sum_{l}u_{i,l}t^l,\, u_{i,l}\in W.
\end{eqnarray}
Alors, nous calculons $v(\omega_{g+i})$ où $v$ est l'application définie en ~\ref{appli_v}:
\begin{eqnarray*}\quad\forall \, 0\leq i \leq g-1,\,
v(\omega_{g+i})&=&-t^{-2(1+i)}\left(2+\sum_{l=1}^{2i+1} u_{i,2l}t^{2l} \right)dt,\\
 v(\omega_{g+i}) &=&-2t^{-2(i+1)} dt -\sum_{l=1}^i u_{i,2l}t^{2(l-i-1)}dt,\\
                 &=&-2t^{-2(i+1)} dt-\sum_{l=i}^1 u_{i,2(i+1-l)}t^{-2l} dt
\end{eqnarray*}

puisque les $v(\omega_{g+i})$ sont anti-invariants pour $\iota$ et que 
$\iota(t)=-t$. Pour terminer nous avons besoin de la proposition suivante

\begin{souscor}(Bogaart-Edixhoven, \cite{Bogaart},\cite{Edixhoven03pointcounting})\label{suite_cark_diff2} On a une suite exacte courte 
$$ (E)\quad 0\rig H^1_{DR}(X)\rig H^0(X,\Omega^1_X(2g\inft))^-\sta{v'}{\rig} \bigoplus_{j=1}^g 
\left(W/(2j-1)W\right)\,t^{-2j}dt\rig 0.$$
\end{souscor}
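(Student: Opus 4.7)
The plan is to extract the stated short exact sequence $(E)$ from the distinguished triangle~(\ref{triangle}) with $D$ the point at infinity and $n=2g$, and then apply the exact functor $M\mapsto M^{-}$.

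First, I would write out the cohomology long exact sequence associated to~(\ref{triangle}). Since $R\Gamma(X,C(2g\inft)/C)$ is concentrated in degree $1$ (equal to $\bigoplus_{i=1}^{2g}(W/(i-1)W)t^{-i}dt$ by~(\ref{cokerd})) and $R\Gamma(X,C(2g\inft))$ is the two-term complex $H^0(X,\OO_X((2g-1)\inft))\rig H^0(X,\Omega^1_X(2g\inft))$, the long exact sequence collapses to
$$0\rig H^1_{DR}(X)\rig H^1R\Gamma(X,C(2g\inft))\rig \bigoplus_{i=1}^{2g}(W/(i-1)W)t^{-i}dt\rig H^2_{DR}(X)\rig 0.$$

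Second, I would apply $(-)^{-}$. Since $\iota(t)=-t$, so $\iota(t^{-i}dt)=(-1)^{i+1}t^{-i}dt$, the antiinvariant part of the third term keeps exactly the even indices $i=2j$, yielding $\bigoplus_{j=1}^{g}(W/(2j-1)W)t^{-2j}dt$. By Proposition~\ref{desc_base_inf_d2}(ii), $H^0(X,\OO_X((2g-1)\inft))^{-}=0$, so $H^1R\Gamma(X,C(2g\inft))^{-}=H^0(X,\Omega^1_X(2g\inft))^{-}$. By Corollaire~\ref{actionDR}, $\iota$ acts by $-\mathrm{Id}$ on $H^1_{DR}(X)$, so $H^1_{DR}(X)^{-}=H^1_{DR}(X)$. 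Finally, $H^2_{DR}(X)\simeq H^1(X,\Omega^1_X)$ is dual to $H^0(X,\OO_X)=W$, on which $\iota$ acts trivially, so $H^2_{DR}(X)^{-}=0$. Exactness of $(-)^{-}$ (valid since $\mathrm{car}(k)\neq 2$) then yields the asserted short exact sequence $(E)$.

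The only subtle bookkeeping is to check that the map $v'$ obtained this way coincides with the map $v$ of~(\ref{appli_v}) restricted to sections invariant under $-\iota$, but this is immediate from the construction of the connecting map in the long exact sequence (both are induced by expanding a meromorphic differential at $\inft$ and taking its polar part modulo exact principal parts, which is exactly the cokernel $\bigoplus (W/(i-1)W)t^{-i}dt$). No step presents a genuine obstacle; the work is really done in Proposition~\ref{desc_base_inf_d2}(ii), Corollaire~\ref{actionDR}, and the identification of $R\Gamma(X,C(2g\inft)/C)$ at the beginning of Section~\ref{complexesDR}.
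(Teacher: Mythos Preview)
Your proof is correct and follows essentially the same route as the paper: take the long exact sequence from the triangle~(\ref{triangle}) with $n=2g$, apply the exact functor $(-)^{-}$, and simplify each term using the identifications already established. The only slip is a reference: the vanishing $H^0(X,\OO_X((2g-1)\inft))^{-}=0$ is Proposition~\ref{deg_suite_spec_tw}(ii), not a proposition labeled ``desc\_base\_inf\_d2''.
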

Reprenons le triangle ~\ref{triangle} de ~\ref{complexesDR} avec $D$ correspondant à la section à l'infini, et 
$n=2g-1$. On trouve le complexe exact
\begin{multline}\label{complexe1}
0 \rig H^0_{DR}(X) \rig R^0\Gamma(X,C(2g\inft))\rig 0 \rig H^1_{DR}(X) \rig
R^1\Gamma(X,C(2g\inft))  \\ \rig coker(d) \rig H^2_{DR}(X)\rig 0.
\end{multline}
Appliquons maintenant $\iota=-Id$ à ce complexe. Par dégénerescence de la suite spectrale
de Hodge vers de Rham, $ H^2_{DR}(X)$ s'identifie à $H^1(X,\Omega^1_X)=W$ sur lequel $\iota$ agit trivialement
et donc $ H^2_{DR}(X)^-$ est nul, tout comme $H^0_{DR}(X)^{-}$. 

Le complexe $R\Gamma(X,C(2g\inft))$ 
est le complexe (~\ref{acyc})
$$H^0(X,\OO_X(2g-1)\inft)\sta{d}{\rig} H^0(X,\Omega^1_X(2g\infty)),$$ de sorte que 
$R^1\Gamma(X,C(2g\inft))^-$ se réduit à $H^0(X,\Omega^1_X(2g\infty))^-$ d'après ~\ref{deg_suite_spec_tw}. 

Il reste à calculer $coker(d)^-$. Le conoyau 
$coker(d)$ est $\left(\bigoplus_{j=1}^{2g} W/(j-1)W\,t^{-j}dt \right)$. 

En passant 
à la partie $-$ de ce module, seuls subsistent les termes correspondant à $j$ pair, d'où le (i) de l'énoncé.

Reprenons les notations de~\ref{notation-u}.

Dans les bases $(\omega_0,\ldots,\omega_{2g-1})$ et $t^{-2l}dt$ pour $1\leq l\leq g$, 
la matrice de l'application $v$ (\ref{appli_v}) 
qui est dans $M_{g,2g}(W)$ la suivante. Notons 
$V=(v_{2l,i})$ avec $v_{2l,i}\in W$, avec $i\in \{0,\ldots ,2g-1\}$, 
$l\in\{1,\ldots, g\}$ cette matrice. Les coefficients sont 
\begin{eqnarray*}\forall i \leq g-1,\,\forall l,\, v_{2l,i}&=&0 \\
                  \forall 0\,\leq i \leq g-1,\, v_{2(i+1),g+i}&=&-2 \\
               \forall 0\,\leq i \leq g-1,\,\forall l\leq i,\, v_{2l,g+i}&=&-u_{i,2(i+1-l)}\\
                \forall 0\,\leq i \leq g-1,\,\forall l \geq i+1,\, 
v_{2l,g+i}&=& 0.
\end{eqnarray*}
La matrice $V$ est donc 
$$V=\left(\vcenter{
\xymatrix{ 0\ar@{.}[rrrr] &&&& 0 &
   -2&-u_{1,2}  &-u_{2,4}\ar@{.}[rr] && -u_{g-1,4g-1}\\
                   0\ar@{.}[rrrr]\ar@{.}[ddd] & &&& 0 \ar@{.}[ddd]&
    0& -2& -u_{2,2}\ar@{.}[rr]&& -u_{g-1,4g-3} \ar@{.}[dd]\\
                   &  &&  & 
    &0 \ar@{.}[dd] & 0   \ar@{.}[rrdd]\ar@{.}[dd]   & -2\ar@{.}[rrdd] &&   \ar@{.}[d]   \\
    &&&&&    & &&& -u_{g-1,2} \\
    0\ar@{.}[rrrr] & &&& 0 &0 
     & 0\ar@{.}[rr]&  &0& -2}}
\right)$$
et l'application $v'$ s'obtient à partir de $v$ par passage au quotient via la surjection
canonique 
$$\bigoplus_{i=0}^{g-1}W t^{-2(i+1)} dt \twoheadrightarrow
  \bigoplus_{i=0}^{g-1}W/(2i+1)W t^{-2(i+1)} dt.$$ 

On voit donc que $v$ induit une bijection notée $\ov$ 
$$ \bigoplus_{l=1}^g Wt^{-2l}\simeq \bigoplus_{i=0}^{g-1}W t^{-2(i+1)} dt ,$$
dont la matrice sera notée $\ovV$, et qui est triangulaire supérieure dans les bases
choisies. Un élément $\omega \in \bigoplus_{i=0}^{g-1}W t^{-2(i+1)} dt$ est dans $Ker(v')$ 
si et seulement si $\omega\in \ov^{-1}(Ker(s))$. Or on a l'identification 
$$Ker(s) =\bigoplus_{i=0}^{g-1}(2i+1)W t^{-2(i+1)} dt.$$
Pour $i\in\{0,\ldots,g-1\}$, définissons 
$$\omega'_{g+i}=\ovV^{-1}\left(\vcenter{
\xymatrix{
0\ar@{.}[d]  \\  0 \\ (2i+1)t^{-2(i+1)}dt\\
0\ar@{.}[d] \\ 0 }} 
\right)\in (2i+1)\omega_{g+i}\bigoplus_{l\leq i}W
\omega_{g+l}.$$
Nous avons alors la
\begin{prop}\label{base} Le $W$-module $H^1_{DR}(X)$ est libre de rang $2g$ de base des éléments
            \begin{eqnarray*} \frac{x^idx}{y}\quad {pour}\, 0\leq i \leq g-1, \\
                           \omega'_{g+i} \quad {pour} \, 0\leq i \leq g-1.
            \end{eqnarray*}
\end{prop}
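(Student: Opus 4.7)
The plan is to exploit the short exact sequence (E) of Corollary~\ref{suite_cark_diff2}, which realizes $H^1_{DR}(X)$ as the kernel of the map $v'$ inside the free $W$-module $H^0(X,\Omega^1_X(2g\inft))^-$ of rank $2g$, whose basis $\omega_0,\ldots,\omega_{2g-1}$ was exhibited in Proposition~\ref{desc_base_inf}. Since we already have an explicit description of the matrix $V$ of $v$ in this basis, the problem reduces to a structured linear algebra computation over $W$.

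First I would observe that the first $g$ columns of $V$ are zero, since $v(\omega_i)=0$ for $0\leq i\leq g-1$. Consequently the elements $\omega_0,\ldots,\omega_{g-1}$ already sit in $\ker(v)\subseteq\ker(v')$, accounting for $g$ of the claimed basis vectors. Next I would analyse the restriction $\ov$ of $v$ to $\bigoplus_{i=0}^{g-1}W\omega_{g+i}$: by the explicit form of $V$ given just before the proposition, its matrix $\ovV$ is upper triangular with diagonal entries all equal to $-2$. Since $p\neq 2$, $-2$ is a unit in $W$, so $\ovV$ is invertible and $\ov$ is an isomorphism onto $\bigoplus_{j=1}^{g}W\,t^{-2j}dt$.

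Having established this, I would compute the kernel of $v'$ restricted to the span of the $\omega_{g+i}$: since $v'$ is the composition of $v$ with the surjection $s\colon\bigoplus W\,t^{-2j}dt\twoheadrightarrow\bigoplus W/(2j-1)W\,t^{-2j}dt$, this kernel equals $\ov^{-1}(\ker s)=\ov^{-1}\!\bigl(\bigoplus_{j=1}^{g}(2j-1)W\,t^{-2j}dt\bigr)$, which by the very definition of the $\omega'_{g+i}$ is the free $W$-module with basis $\omega'_g,\ldots,\omega'_{2g-1}$. Combining this with the vanishing of the first $g$ columns of $V$, I obtain the internal direct sum decomposition
$$\ker(v')=\bigoplus_{i=0}^{g-1}W\omega_i\;\oplus\;\bigoplus_{i=0}^{g-1}W\omega'_{g+i},$$
and hence the announced basis. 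Linear independence is automatic from the upper triangularity of $\ovV^{-1}$, which ensures that each $\omega'_{g+i}$ has a nonzero component on $\omega_{g+i}$ modulo the span of the $\omega_{g+l}$ with $l<i$.

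The only step calling for genuine care is bookkeeping the two indexing conventions (the source index $i\in\{0,\ldots,g-1\}$ for $\omega_{g+i}$ and the target index $j\in\{1,\ldots,g\}$ for $t^{-2j}dt$, related by $j=i+1$) and checking that the torsion generator $(2j-1)$ in row $j$ of $\ker s$ matches the factor $(2i+1)$ used to define $\omega'_{g+i}$. Beyond this bookkeeping, the argument is essentially a direct unwinding of the exact sequence (E) combined with the invertibility of $\ovV$.
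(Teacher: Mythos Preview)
Your proposal is correct and follows essentially the same route as the paper: the paper's argument for Proposition~\ref{base} is precisely the discussion immediately preceding its statement, namely the identification of $H^1_{DR}(X)$ with $\ker(v')$ via the exact sequence~(E), the observation that $v$ kills $\omega_0,\ldots,\omega_{g-1}$, the invertibility of the upper-triangular block $\ovV$ (diagonal entries $-2$, units since $p\neq 2$), and the definition of the $\omega'_{g+i}$ as $\ov^{-1}$ applied to the generators $(2i+1)t^{-2(i+1)}dt$ of $\ker(s)$. Your write-up makes the direct-sum decomposition $\ker(v')=M_1\oplus\ov^{-1}(\ker s)$ (valid because $v'$ vanishes on $M_1=\bigoplus_{i<g}W\omega_i$) more explicit than the paper does, but the underlying argument is identical.
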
 
Si $p>2g-1$ le terme de droite de la suite exacte~\ref{suite_cark_diff2} est nul, ce qui donne 
l'énoncé suivant.
\begin{cor} Si $p>2g-1$ on a un isomorphisme
$$ H^1_{DR}(X)\simeq H^0(X,\Omega^1_{X}(2\infty))^-.$$
En particulier, si $X$ est une courbe elliptique, on a toujours cet isomorphisme.
\end{cor}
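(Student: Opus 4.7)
Le plan est direct : il suffit d'appliquer la suite exacte courte \'etablie au corollaire \ref{suite_cark_diff2}, � savoir
$$0 \rig H^1_{DR}(X)\rig H^0(X,\Omega^1_X(2g\inft))^-\sta{v'}{\rig} \bigoplus_{j=1}^g \left(W/(2j-1)W\right)\,t^{-2j}dt\rig 0,$$
et de montrer que, sous l'hypoth\`ese $p>2g-1$, le terme de droite est nul.

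Pour ce faire, je remarquerais que pour $j$ variant de $1$ \`a $g$, les entiers $2j-1$ parcourent la liste $\{1,3,5,\ldots,2g-1\}$. Si $p>2g-1$, alors chacun de ces entiers est premier \`a $p$, donc inversible dans l'anneau de valuation discr\`ete $W=W(k)$ (dont l'id\'eal maximal est engendr\'e par $p$). Ainsi $W/(2j-1)W=0$ pour tout $j\in\{1,\ldots,g\}$, ce qui annule le conoyau de $v'$ et donne l'isomorphisme voulu.

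Pour la seconde partie concernant les courbes elliptiques ($g=1$), le membre de droite se r\'eduit au seul terme $W/(2\cdot 1-1)W = W/W = 0$, qui est trivialement nul en toute caract\'eristique. L'isomorphisme $H^1_{DR}(X)\simeq H^0(X,\Omega^1_X(2\infty))^-$ est donc valable ind\'ependamment de la valeur de $p$.

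Il n'y a pas d'obstacle s\'erieux dans cette d\'emonstration : tout le travail technique a d\'ej\`a \'et\'e effectu\'e dans l'\'etablissement de la suite exacte \ref{suite_cark_diff2}, et l'\'enonc\'e pr\'esent n'en est qu'un corollaire arithm\'etique imm\'ediat. Le seul point \`a noter (et \`a signaler en passant) est que la condition $p>2g-1$ est suffisante mais pas n\'ecessaire : il suffirait en fait que $p$ ne divise aucun des entiers impairs $1,3,\ldots,2g-1$, ce qui est automatique d\`es que $p\geq 2g+1$ mais peut aussi se produire pour des valeurs plus petites de $p$.
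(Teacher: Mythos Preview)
Your proof is correct and follows exactly the same approach as the paper, which simply remarks in one line that for $p>2g-1$ the right-hand term of the exact sequence of corollary~\ref{suite_cark_diff2} vanishes. Your elaboration of why each $W/(2j-1)W$ is zero, and your separate treatment of the elliptic case $g=1$ where the single term $W/W$ vanishes unconditionally, are precisely the details the paper leaves implicit. (One small wording slip: it is the \emph{target} of $v'$ that vanishes, not its cokernel --- the cokernel is already zero by exactness.)
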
 

\section{Action du Frobenius et $(\varphi,\Gamma)$-module associ\'e}
\label{phi_Gamma_courbes}
\subsection{Action du Frobenius}

\subsubsection{G\'en\'eralit\'es}
\label{psi-frob-divise}
Rappelons qu'un $\varphi$-module filtr\'e sur $W$ est la donn\'ee d'un $W$-module muni d'un endomorphisme $\varphi$ semi-lin\'eaire par rapport au Frobenius de $W$ et d'une filtration par des sous-$W$-modules, d\'ecroisssante, exhaustive et s\'epar\'ee.

La cohomologie de De Rham $H^1_{DR}(X)$ d'une courbe $X$ projective et lisse de genre $g$ est un $W$-module libre de rang $2g$, muni de la filtration de Hodge:
$$\begin{array}{lll}\fil^rH^1_{DR}(X) &=H^1_{DR}(X)\quad&\hbox{ pour }r\leq0,\\
&=H^0(X,\Omega^1_X)\quad&\hbox{ pour }r=1,\\
&=0 \quad&\hbox{ pour }r\geq2.\\
\end{array}$$
De plus, la cohomologie de De Rham de $X$ s'identifie \`a la cohomologie cristalline $H^1_{cris}(X_k)$ de $X_k$ \cite{Berthelot-coh_cris}; elle est donc munie d'un endomorphisme de Frobenius, induit par l'action du Frobenius sur $X_k$. On obtient ainsi un $\varphi$-module filtr\'e.

Lorsque les $W$-modules $H^p(X,\Omega^q_X)$ sont des $W$-modules libres et que la suite spectrale associ\'ee d\'eg\'en\`ere (Proposition \ref{suite-spectrale} pour une courbe hyperelliptique), on sait, par une application d'un th\'eor\`eme de Mazur \cite{Mazur-hodge-filt} et \cite{Fo83}, que le $\varphi$-module filtr\'e $H^1_{DR}(X)$ est un r\'eseau fortement divisible de $H^1_{DR}(X_K)$: pour tout entier $r$, chaque $\fil^rH^1_{DR}(X)$ est un facteur direct de $H^1_{DR}(X)$, on a 
$$\varphi(\fil^rH^1_{DR}(X))\in p^rH^1_{DR}(X)$$  et $$\sum_{r\in\bf Z}{1\over p^r}\varphi(\fil^rH^1_{DR}(X))=H^1_{DR}(X).$$ 

Consid\'erons $(e_i)_{1\leq i\leq 2g}$ une base de $H^1_{DR}(X)$ adapt\'ee \`a la filtration \cite{Wa97}; soit $r_i$ le plus grand entier tel que $e_i\in\fil^{r_i}H^1_{DR}(X)$ et pour tout entier $r\geq0$, posons $\varphi_r={1\over p^r}\varphi_{|_{\fil^r}}$. On a ainsi, pour $1\leq i\leq 2g$, si
$$e_i\in\fil^1H^1_{DR}(X), \quad r_i=1, \quad\hbox{ et }\quad\varphi_1(e_i)={1\over p}\varphi(e_i)$$ 
et, sinon, 
$$r_i=0\quad\hbox{ et }\quad\varphi_0(e_i)=\varphi(e_i).$$ 
Ecrivons, pour $1\leq j\leq 2g$, 
$$\varphi_{r_j}(e_j)=\sum_{i=1}^{2g}a_{ij}e_i$$
l'expression de $\varphi_{r_j}(e_j)$ dans la base $(e_i)$. Le $\varphi$-module filtr\'e $H^1_{DR}(X)$ \'etant fortement divisible, les applications $\varphi_r$ sont bien d\'efinies, 
et la matrice $A=(a_{ij})$ est une matrice \`a coefficients dans $W$, qui est inversible (\cite{Wa97}, 2.2.2).

\subsection{($\varphi,\Gamma$)-module associ\'e}
\label{phi-gamma-module}

Consid\'erons la ${\bf Z}_p$-extension cyclotomique $K_\infty$ de $K$ contenue dans $\bar K$, notons $\Gamma$ le groupe de Galois de $K_\infty/K$, qui est isomorphe \`a ${\bf Z}_p$, et choisissons-en un g\'en\'erateur topologique, not\'e $\gamma$. On munit l'anneau $S=W[[T]]$  d'une action de $\Gamma$  telle que $\gamma(T)-T=\alpha(p+T)T$, o\`u $\alpha$ est une unit\'e de $S$, qui est congrue \`a une unit\'e de ${\bf Z}_p$ modulo $T$ et \`a $1$ modulo $(p,T)$. Dans ces conditions, il existe une unique action de Frobenius, not\'ee $\varphi$, commutant \`a l'action de $\Gamma$, compatible avec le Frobenius sur $W$, qui rel\`eve l'\'el\'evation \`a la puissance $p$ modulo $p$ et telle que $\varphi(T)=u(p+T)^{p-1}T$, o\`u $u\equiv1$ modulo $pS$ (pour un expos\'e d\'etaill\'e des propri\'et\'es de S, voir \cite{Fo90}, 3.2, \cite{Fo94},  ou \cite{Wa97}, 3.1.1.).

Pour une courbe projective et lisse $X$ v\'erifiant les hypoth\`eses du th\'eor\`eme de Mazur \cite{Mazur-hodge-filt}, on d\'efinit sur le module $N=S\otimes_WH^1_{DR}(X)$ une action de $\varphi$ par
$$\varphi(1\otimes e_j)=(p+T)^{r_j}\sum_{i=1}^{2g}a_{ij}(1\otimes e_i),$$
l'action de $\Gamma$ \'etant induite par la proposition suivante (cf \cite{Wa97}, 3.1.4, th\'eor\`eme 3):

\begin{prop} \label{unicite-gamma} Il existe sur $N$ une unique action de $\Gamma$ commutant \`a $\varphi$ et triviale modulo $T$. \end{prop}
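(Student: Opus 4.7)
The plan is to construct the unique matrix $G \in M_{2g}(S)$ of a topological generator $\gamma$ of $\Gamma$ in the basis $(1 \otimes e_j)$ of $N$ by successive approximation modulo powers of $T$, and then to extend $\gamma$ to $\Gamma \cong \Zp$ by continuity.

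Taking into account that $\varphi$ and $\gamma$ act on $N$ semi-linearly with respect to the corresponding actions on $S$, and that these actions commute on $S$, the commutation $\gamma\varphi = \varphi\gamma$ on $N$ translates to the matrix equation
\[ G \cdot \gamma(M) \;=\; M \cdot \varphi(G), \]
where $M \in M_{2g}(S)$ is the matrix of $\varphi$ (with $(i,j)$-entry $a_{ij}(p+T)^{r_j}$), and the triviality condition modulo $T$ becomes $G \equiv I_{2g} \pmod T$. Writing $G = I_{2g} + \sum_{k\geq 1} G_k T^k$ with $G_k \in M_{2g}(W)$, I would argue by induction on $n$ that $G_1, \ldots, G_{n-1}$ are uniquely determined so that the commutation relation holds modulo $T^n$. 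Extracting the coefficient of $T^n$ on both sides, and using $\varphi(T^n) \equiv u_0^n p^{(p-1)n} T^n \pmod{T^{n+1}}$, one obtains a semi-linear equation
\[ G_n M_0 \;-\; u_0^n\, p^{(p-1)n}\, M_0\, \sigma(G_n) \;=\; F_n, \]
where $M_0 = M \bmod T = A \cdot \mathrm{diag}(p^{r_j})$, $\sigma$ is the Frobenius of $W$ (applied entrywise), $u_0 = u \bmod T \in 1+pW$, and $F_n \in M_{2g}(W)$ depends only on $A$, the exponents $r_j$ and the previously determined $G_1, \ldots, G_{n-1}$.

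The main technical step is to prove that this semi-linear equation admits a unique solution $G_n \in M_{2g}(W)$. The strong divisibility of the $\varphi$-module $H^1_{DR}(X)$ recalled in \ref{psi-frob-divise} guarantees $A \in \mathrm{GL}_{2g}(W)$, so that $M_0$ is invertible over $K$. Since $p \neq 2$, we have $(p-1)n \geq 2$ for $n \geq 1$, so the operator $X \mapsto u_0^n p^{(p-1)n} M_0 \sigma(X) M_0^{-1}$ is strictly contracting on $M_{2g}(W[1/p])$ for the $p$-adic topology; a fixed-point argument then yields existence and uniqueness of $G_n$ over $W[1/p]$. The integrality $G_n \in M_{2g}(W)$ is the main obstacle; it is obtained inductively using the recursive structure of $F_n$, which inherits the appropriate $p$-divisibility from the columns of $M$ with $r_j = 1$ and from the induction hypothesis. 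This step is the content of \cite{Wa97}, 3.1.4.

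Finally, once $G$ has been constructed, the congruence $G \equiv I \pmod T$ implies that $G^{p^m}$ converges to $I_{2g}$ in the $(p,T)$-adic topology of $M_{2g}(S)$ as $m \to \infty$, so that $\gamma \mapsto G$ extends uniquely by continuity to an action of $\Gamma \cong \Zp$ on $N$ which commutes with $\varphi$ and is trivial modulo $T$.
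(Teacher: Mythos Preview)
Your overall strategy --- d\'evissage modulo powers of $T$, solving a semi-linear matrix equation at each step --- is the same as the paper's. The difference, and the gap, lies in how the level-$n$ equation is handled.

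You extract the coefficient of $T^n$ from $G\,\gamma(M)=M\,\varphi(G)$ and obtain
\[
G_n M_0 - u_0^n p^{(p-1)n} M_0\,\sigma(G_n) = F_n,
\]
with $M_0 = A\,\mathrm{diag}(p^{r_j})$. Since $M_0$ is only invertible over $K$, you solve by contraction over $W[1/p]$ and then must \emph{prove} $G_n\in M_{2g}(W)$. You acknowledge this is ``the main obstacle'' and defer it to \cite{Wa97}. But this is precisely the substance of the proposition: without it you have only constructed the $\Gamma$-action on $N\otimes_W K$, not on $N$. Note that reducing your equation modulo $p$ gives $G_n M_0 \equiv F_n$, and the columns of $M_0$ with $r_j=1$ vanish mod $p$, so this does not determine $G_n$ over $k$; the integrality is genuinely nontrivial in your setup.

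The paper avoids this issue by a simple but decisive change of bookkeeping: instead of measuring the commutation defect modulo $T^n$, it measures it modulo $T^n(p+T)^{r_j}$ on each $\varphi(e_j)$. Writing $\rho(\varphi(e_j))-\varphi(\rho(e_j))=T^n(p+T)^{r_j}b_j$ absorbs the factor $\mathrm{diag}(p^{r_j})$ into the error, and the resulting equation for the correction $X\in M_{2g}(W)$ takes the form
\[
B = u^n A' X'_\varphi - XA,
\]
which modulo $p$ reduces (using $n(p-1)-r_j>0$ for $p>2$) to $B\equiv -XA$. Since strong divisibility gives $A\in\mathrm{GL}_{2g}(W)$, this is solvable over $k$, and one lifts modulo $p^k$ by the same device. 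Integrality is thus built into the paper's argument from the start; your proposal recovers the same result only after invoking the reference for the step you did not carry out.

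A minor remark on your final paragraph: since $\gamma$ acts semi-linearly, iterating it is not given by powers of the matrix $G$ alone but by $G\cdot\gamma(G)\cdots\gamma^{m-1}(G)$; the convergence argument still goes through using $G\equiv I\pmod{T}$, but the statement as written needs this adjustment.
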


\begin{proof}
Nous reproduisons ici la d\'emonstration de {\it op.cit}, adapt\'ee au cas particulier d'une courbe.

Il suffit de conna\^\i tre la matrice du g\'en\'erateur $\gamma$ dans la base $(1\otimes e_i)_{1\leq i\leq 2g}$, not\'ee simplement $(e_i)_{1\leq i\leq 2g}$; la d\'emonstration se fait par d\'evissages modulo $T$ et modulo $p$ en d\'eformant la matrice de l'identit\'e $I_{2g}$, qui induit sur $N$ une action de $\gamma$ triviale modulo $T$, mais ne commutant pas \`a $\varphi$ a priori. Notons $\rho$ cette action:

$$\begin{array}{ll}\rho(\varphi(e_j))-\varphi(\rho(e_j))&=(\gamma(p+T)^{r_j}-(p+T)^{r_j})\sum_{i=1}^{2g}a_{ij}e_i\\
&=(p+T)^{r_j}\alpha_jT\sum_{i=1}^{2g}a_{ij}e_i\\
\end{array}$$
si $\alpha_j$ est l'unique \'el\'ement de $S$ tel que 
$$\gamma(p+T)^{r_j}=(p+T)^{r_j}(1+T\alpha_j)\quad.$$
 
Comme $S$ est complet pour la topologie $T$-adique, il suffit de v\'erifier 
le lemme suivant:

\begin{lem}
 Soient $n\in\bf N$ et $\rho$ un endomorphisme de $N$, 
semi-lin\'eaire par rapport \`a l'action de $\gamma$ sur $S$ tel que
$\rho(e_j)\equiv e_j\ \mod T$ et $$\rho(\varphi(e_j))\equiv\varphi(\rho(e_j))\ \mod T^n
(p+T)^{r_j}N$$ pour tout $j$; alors, il existe un endomorphisme $\rho^\prime$ de $N$, uniquement d\'etermin\'e
modulo $T^{n+1}$, semi-lin\'eaire
par rapport \`a l'action de $\gamma$ sur $S$ tel que, pour tout $j$: 
$$\begin{array}{ll} &\rho^\prime(e_j)\equiv\rho(e_j)\ \mod T^nN\\
\hbox{et} &\rho^\prime(\varphi(e_j))\equiv\varphi(\rho^\prime(e_j))\ \mod T^{n+1}(p+T)^{r_j}N\quad.\\\end{array}$$
\end{lem}

D\'emontrons la proposition. On pose, pour tout $j$, $1\leq j\leq 2g$: 
$$\rho(\varphi(e_j))-\varphi(\rho(e_j))=T^n(p+T)^{r_j}b_j$$
o\`u $b_j$ est un \'el\'ement de $N$ et on cherche la matrice $G'$ de $\rho^\prime$ dans la base $(e_i)$ sous la forme $G'=G+T^nX$ o\`u $G$ est la matrice de $\rho$ et $X$ une matrice \`a coefficients dans $W$.

Rappelons que $\varphi(T)=uT(p+T)^{p-1}$ avec $u$ une unit\'e de $S$; alors,
$$\begin{array}{ll}\rho^\prime(\varphi(e_j)) &=\rho^\prime((p+T)^{r_j}\sum_{i=1}^{2g}a_{ij}e_i)\\
&=\rho(\varphi(e_j))+(p+T)^{r_j}(1+T\alpha_j)T^n\sum_{1\leq i,k\leq 2g}
a_{kj}x_{ik}e_i\\ \end{array}$$
et 
$$
\begin{array}{ll}\varphi(\rho^\prime(e_j)) &=\varphi(\rho(e_j))+\varphi( T^n)\sum_{i=1}^{2g}
\varphi(x_{ij})\varphi(e_i)\\
&=\varphi(\rho(e_j))+ T^n(p+T)^{n(p-1)}u^n\sum_{1\leq i,
k\leq 2g}\varphi(x_{kj})(p+T)^{r_k}a_{ik}e_i\quad.\\ \end{array}$$
On obtient une \'equation matricielle
\`a r\'esoudre
$$(\ast)\quad\quad B=u^nA'X'_\varphi-XA,$$ o\`u $B$ est la matrice modulo $T$ du syst\`eme $(b_j)$ dans la base $(e_i)$, $A'=(p^{r_j}a_{ij})$ et $X'_\varphi=(p^{n(p-1)-r_j}\varphi(x_{ij}))$.
 
Ces matrices sont toutes \`a coefficients dans $W$, qui est complet pour la topologie $p$-adique; on commence par r\'esoudre ce syst\`eme modulo $p$.

Dans notre cas, $r_j=0$ ou $1$, donc pour $p>2$,on a $n(p-1)-r_j>0$ et l'\'equation devient:
$$B\equiv-XA\mod\ p$$
et on conclut en utilisant le fait que la matrice $A$ est inversible. 

\smallskip Notons $X_k$ une solution de $B=u^nA'X'_\varphi-XA$ modulo
$p^k$ et $B_k=B-u^nA'X'_{k,\varphi}+X_kA$. Relevons $X_k$ en $X_{k+1}=X_k+p^kY_k$ solution modulo $p^{k+1}$. L'\'equation v\'erifi\'ee par $Y_k$ est \`a nouveau $B'=-YA$ modulo $p$, qui se r\'esout en inversant la matrice $A$.
\end{proof}

\subsection{Complexit\'e}
\label{complexite} 

La d\'emonstration ci-dessus fournit un algorithme permettant d'obtenir l'action de $\Gamma$ \`a une pr\'ecision modulo $p^i$ et modulo $T^j$ souhait\'ee. 

Rappelons que calculer le produit de deux matrices carr\'ees de taille $l$ ou l'inverse d'une matrice \`a coefficients dans un anneau n\'ecesite un nombre d'op\'erations dans l'anneau d'au plus $O(l^\omega)$, o\`u $\omega$ est un r\'eel compris entre $2$ et $2,81$ (\cite{complexity_theory} chapitres 15-16). La matrice $A$ modulo $p$, dont il faut calculer l'inverse, est \'el\'ement de ${\cal M}_{2g}(k)$, d'o\`u une complexit\'e de $O(g^\omega n^{1+\varepsilon})$ pour ce calcul.

Pour effectuer la $k^{\rm i\grave eme}$ \'etape du d\'evissage modulo $p$, il faut d'abord \'elever \`a la puissance $p$ les coefficients de la matrice carr\'ee $Y_{k-1}$, ce qui se fait en $O((\log p)^{1+\varepsilon}g^{2+\varepsilon}n^{1+\varepsilon})$ op\'erations, puis calculer $B-u^nA'X'_{k,\varphi}+X_kA$, c'est-\`a-dire deux produits et sommes de matrices modulo $p^k$, calcul de complexit\'e totale de l'ordre de $O(g^\omega n^{1+\varepsilon}k^{1+\varepsilon})$.
 
Cette \'etape se r\'ep\'etant pour $k$ variant entre $1$ et $i$, la complexit\'e totale du d\'evissage modulo $p$ est $O(i(\log p)^{1+\varepsilon}g^{2+\varepsilon}n^{1+\varepsilon})+O(g^\omega n^{1+\varepsilon}i^{2+\varepsilon})$. 

Les m\^emes arguments permettent de calculer la complexit\'e du d\'evissage modulo $T$, sachant qu'\`a chaque \'etape, on effectue un d\'evissage modulo $p$. La complexit\'e totale de l'algorithme pour d\'eterminer la matrice de $\Gamma$ modulo $p^i$ et $T^j$ est donc au plus $$O(ji(\log p)^{1+\varepsilon}g^{2+\varepsilon}n^{1+\varepsilon})+O(g^\omega n^{1+\varepsilon}j^{2+\varepsilon}i^{2+\varepsilon}).$$

\section{Application au cas des courbes hyperelliptiques}
Dans cette partie, $X$ est une courbe hyperelliptique et nous reprenons les notations 
de ~\ref{coh_DR_courbes-hyperell}. 
La méthode générale précedente ~\ref{phi_Gamma_courbes} met en jeu le calcul du Frobenius 
divisé sur la cohomologie de De Rham de $X$. Nous allons donner dans la suite un
algorithme pour calculer le $(\varphi, \Gamma)$-module mod $p^i$ associé à 
$H^1_{et}(X_{\overline{K}},\mathbf{Q}_p)$ basé sur l'algorithme de Kedlaya pour 
calculer la fonction Zêta de $X_0$. Ensuite nous donnerons une autre méthode, 
basée sur le morphisme de Deligne-Illusie et notre article~\cite{huyghe-wach_interp-crist}, pour calculer 
ce $(\varphi, \Gamma)$-module modulo $p$. Enfin nous comparerons ce que nous obtenons 
en suivant l'algorithme de Kedlaya mod $p$ ou en suivant la deuxième méthode utilisant le morphisme de Deligne-Illusie.

\subsection{Calcul du $(\varphi, \Gamma)$-module associé aux courbes hyperelliptiques via l'algorithme de Kedlaya}
\label{methode_kedlaya}
La base $(({x^idx\over y})_{0\leq i \leq g-1}, (\omega'_i)_{g\leq i \leq 2g-1})$ pr\'esent\'ee dans le corollaire \ref{base} est une base adapt\'ee \`a la filtration. 
Notons $e_i={x^{i-1}dx\over y}$ (respectivement $e_i=\omega'_{i-1}$) pour $1\leq i\leq g$ (respectivement pour $g+1\leq i\leq 2g$) les \'el\'ements de cette base. On a bien, pour $1\leq i\leq g$, $$e_i\in\fil^1H^1_{DR}(X)  \quad\hbox{ et }\quad\varphi_1(e_i)={1\over p}\varphi(e_i)$$ 
et, pour $g+1\leq i\leq 2g$, $$e_i\in\fil^0H^1_{DR}(X), \quad r_i=0\quad\hbox{ et }\quad\varphi_0(e_i)=\varphi(e_i).$$ 


 Notons $X'=\spec A'$ la courbe obtenue en enlevant les points de ramification du rev\^etement $h:X\rightarrow {\bf P}^1_W$ et $X'_k$ sa fibre sp\'eciale. Kedlaya  \cite{Kedlaya_hyperell} d\'ecrit un algorithme qui permet de calculer l'action du Frobenius sur la cohomologie rigide \`a coefficients dans $K$ de la courbe $X'_k$. Cette cohomologie s'identifie \`a la cohomologie de de Rham (cf \cite{BC94}) de $X_K'$ et le $K$-espace  vectoriel $H^1_{DR}(X'_K)$ se trouve ainsi muni d'une action de Frobenius. L'inclusion $X'\hookrightarrow X$ induit des applications $H^1_{DR}(X)\rightarrow H^1_{DR}(X')$ et $H^1_{DR}(X_K)\rightarrow H^1_{DR}(X_K')$ qui sont injectives et \'equivariantes par l'action de Frobenius (cf \cite{Bogaart}). Nous pouvons donc utiliser cet algorithme pour calculer la matrice de $\varphi$ et, apr\`es changement de base dans la base form\'ee des vecteurs $(e_i)$, en d\'eduire la matrice $A$. 

Cette matrice est la donn\'ee de l'algorithme d\'ecrit en  \ref{phi-gamma-module} et nous pouvons donc en d\'eduire la matrice de $\Gamma$ \`a une pr\'ecision modulo $p^i$ et modulo $T^j$ arbitraire.  

Rapelons que la complexit\'e de l'algorithme permettant d'obtenir l'action de $\Gamma$ a \'et\'e \'evalu\'ee comme au plus $$O(ji(\log p)^{1+\varepsilon}g^{2+\varepsilon}n^{1+\varepsilon})+O(g^\omega n^{1+\varepsilon}j^{2+\varepsilon}i^{2+\varepsilon}).$$ 
Le changement de base n'a pas d'influence sur cette complexit\'e, mais il convient de rajouter la complexit\'e de l'algorithme permettant de calculer la matrice $A$ modulo $p^i$, \`a savoir $O(p^{1+\varepsilon}N^{2+\varepsilon}g^{2+\varepsilon}n^{1+\varepsilon})$ pour d\'eterminer la matrice du Frobenius modulo $p^{i+1}$ via l'algorithme de Kedlaya, l'entier $N$ v\'erifiant $N-v_p(2N+1)\geq i+1$.
Au final, la complexit\'e totale est major\'ee par $$O(g^\omega n^{1+\varepsilon}j^{2+\varepsilon}i^{2+\varepsilon}p^{1+\varepsilon}).$$

\subsection{Deuxième méthode de calcul du Frobenius divis\'e modulo $p$ pour une courbe hyperelliptique}\label{frob_divise}
\subsubsection{Rappel de la méthode}
Pour un sch\'ema $Y$ sur $\spec k$, notons $Y'$ le sch\'ema d\'eduit de $Y$ par le changement de base induit par le Frobenius $\sigma:a\mapsto a^p$ sur $k$ et $F:Y\rightarrow Y'$ le Frobenius relatif.

Soit $X_0$ une courbe propre et lisse sur $\spec k$, de dimension $1$, qui se relève en une courbe propre 
et lisse $X$ sur $\spec W$. 
Sous ces conditions, la suite spectrale de Hodge vers de Rham d\'eg\'en\`ere en $E_1$ (\cite{fontaine-messing_pad_pre_et_coh}) et le choix d'un scindage de la suite exacte
$$0\rightarrow H^0(X'_0,\Omega^1_{X'_0/k})\rightarrow H^1_{dR}(X'_0/k)\rightarrow H^0(X'_0,\Omega^1_{X'_0/k})\rightarrow 0$$ d\'etermine un isomorphisme 
$$ H^0(X'_0,\Omega^1_{X'_0/k})\oplus H^1(X'_0,{\cal O}_{X'_0})\simeq H^1_{dR}(X'_0/k).$$
Par les théorèmes de comparaison de Berthelot entre la cohomologie de de Rham et la cohomologie cristalline, 
le Frobenius cristallin induit une application $$ \Phi : H^1_{DR}(X'_0)\rig H^1_{DR}(X_0).$$
Comme en ~\ref{psi-frob-divise}
 Le choix d'un scindage $H^1_{DR}(X'_0)\simeq  H^1(X'_0,\OO_{X'_0})\oplus H^0(X'_0,\Omega^1_{X'_0})$,
donne ainsi un morphisme de Frobenius divisé $$\Phi_M: H^1(X'_0,\OO_{X'_0})\oplus H^0(X'_0,\Omega^1_{X'_0})\rig H^1_{DR}(X_0),$$
qui mod $p$ ne dépend pas du scindage (4.2 de \cite{huyghe-wach_interp-crist}), 
et qui, après extension des scalaires à $\OO_{X_0}$, redonne l'application $\psi$ de ~\ref{psi-frob-divise}.

Par ailleurs, la courbe $X_0$ se rel\`eve en un schéma propre et lisse $X$ sur $\spec W$, et a posteriori mod $p^2$. 
Avec ces hypothèses d'existence d'un tel relèvement, Deligne et Illusie (\cite{deligne_illusie_dec_dr}) ont construit un quasi-isomorphisme 
$$f: \OO_{X'_0}\oplus \Omega^1_{X'_0/k}[-1]\rightarrow F_\ast\Omega^\cdot_{X_0/k}$$ qui induit l'isomorphisme de Cartier $\OO_{X'_0}$-linéaire
$$C^{-1}:\OO_{X'_0}\oplus \Omega^1_{X'_0/k}\rightarrow{\cal H}^0F_\ast\Omega^\cdot_{X_0/k}\oplus {\cal H}^1F_\ast\Omega^\cdot_{X_0/k}. $$ 
Appliquons le foncteur $R^1\Gamma(X'_0,.)$ au quasi-isomorphisme $f$ : cela donne une application 
$$R^1\Gamma(X'_0,f): H^1(X'_0,\OO_{X'_0})\oplus H^0(X'_0,\Omega^1_{X'_0})\rig H^1_{DR}(X_0).$$
L'interpr\'etation cristalline de ce morphisme (thm 4.3 de \cite{huyghe-wach_interp-crist}) affirme que $R^1\Gamma(X'_0,f)=\Phi_M$.
Ainsi, l'application de Frobenius divisé introduite 
en ~\ref{psi-frob-divise} qui permet de calculer le $(\varphi,\Gamma)$-module mod $p$ associé au $\varphi$-module 
filtré $H^1_{DR}(X)$, s'obtient par extension des scalaires à partir de l'action sur la cohomologie 
du morphisme de Deligne-Illusie. Notons au passage que la matrice $A$ introduite en ~\ref{psi-frob-divise}
ne dépend pas, mod $p$, de la base. Pour calculer cette matrice $A$ mod $p$, il nous suffit 
donc de calculer le morphisme de Deligne Illusie $f$. Etant donné un 
recouvrement affine et lisse de $X_1$ par des ouverts $(\UU_i)_{i\in I}$, l'application $f$ est un 
morphisme de complexes : $$ f : \oplus_{i=0}^1\Omega^i_{X'_0/k}[-i]\rightarrow F_\ast\check{\cal C}^.({\cal
U},\Omega^._{X_0/k}),$$ où le complexe but est le complexe simple associé au bicomplexe de \v{C}ech
du recouvrement $(\UU_i)_{i\in I}$ à valeurs dans $F_* \Omega^._{X_0/k}$. 

 Or, il s'avère que la construction de 
$f$ expliquée dans notre cas en~\ref{construction-f} est complètement explicite et 
donne un proc\'ed\'e pour obtenir algorithmiquement dans le cas d'une courbe hyperelliptique l'expression du Frobenius divis\'e modulo $p$. Plus
pr\'ecis\'ement, on calcule la matrice $A$ modulo $p$ de $\psi$ décrit en ~\ref{psi-frob-divise} dans la base $(e_i)_{1\leq i\leq 2g}$ adapt\'ee \`a la filtration. Cette m\'ethode a d\'ej\`a utilis\'ee dans le cas des courbes de Drinfeld ({\it loc. cit.}). Elle n\'ecessite le choix d'un scindage (ou d'une base adapt\'ee \`a la filtration), la connaissance d'un rel\`evement de l'endomorphisme de Frobenius modulo $p^2$ sur un recouvrement affine et utilise le complexe de \v{C}ech. 
\subsubsection{Calcul dans le cas particulier des courbes hyperelliptiques}
Rappelons la situation consid\'er\'ee; $P$ est un polyn\^ome unitaire de $W[X]$ de degr\'e impair $d=2g+1$, tel que $P$ est s\'eparable comme polyn\^ome \`a
coefficients dans $K$ et tel que la classe de $P$ dans $k[X]$, not\'ee $P_1$ est aussi s\'eparable.

La courbe $X_0$ sur $k$, fibre sp\'eciale de $X$, est la r\'eunion des deux ouverts affines  
$$U_0=\spec k[x,y]/y^2-P_1(x),$$
$$V_0=\spec k[x_1,t]/t^2-Q_1(x_1),$$ 
o\`u l'on a pos\'e $Q_1(x_1)=x_1^{d+1}P_1(1/x_1)=x_1 R_1(x_1)$, $t=y/x^{g+1}$ et $x_1=1/x$.

Notons $\check{\cal C}({\cal U}, \Omega^\cdot_{X_0/k})$ le complexe simple associ\'e au bicomplexe de \v{C}ech du recouvrement ${\cal U}=(U_0,V_0)$. Le morphisme $\Omega^\cdot_{X_0/k}\rightarrow\check{\cal C}({\cal U},\Omega^\cdot_{X_0/k})$ est un quasi-isomorphisme et induit un isomorphisme
$$H^1_{dR}(X_0/k)\simeq H^1(\check{\cal C}({\cal U},\Omega^\cdot_{X_0/k})).$$

\subsubsection{Description de $H^1_{dR}(X_0/k)$}\label{description_H1}

Dans cette section, nous allons construire un scindage de la filtration de Hodge, ce qui revient \`a construire un isomorphisme 
$$H^0(X_0,\Omega^1_{X_0/k})\oplus H^1(X_0,{\cal O}_{X_0})\simeq H^1(\check{\cal C}({\cal U},\Omega^\cdot_{X_0/k}))$$
pour le recouvrement $\cal U$ de $X_0$ form\'e par $U_0$ et $V_0$.

\begin{sousprop}\label{base_gr0} Le $k$-espace vectoriel
$H^1(X_0,{\cal O}_{X_0})$ a pour base la famille de classes $[h_i]$ o\`u
$$h_i={y\over x^i}={t\over x_1^{g+1-i}}, \ \hbox{ pour }\ 1\leq i \leq g.$$
\end{sousprop}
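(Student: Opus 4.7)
The plan is to compute $H^1(X_0, \OO_{X_0})$ directly via the \v Cech complex of the affine cover $\UU = (U_0, V_0)$. Since both $U_0$ and $V_0$ are affine, one has
$$H^1(X_0, \OO_{X_0}) \simeq \OO_{X_0}(U_0 \cap V_0) / \bigl(\OO_{X_0}(U_0) + \OO_{X_0}(V_0)\bigr),$$
so the problem reduces to making each of these three $k$-vector spaces explicit and reading off the quotient.

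First I would write each ring in terms of a $k$-basis. The ring $\OO(U_0) = k[x,y]/(y^2 - P_1(x))$ is a free $k[x]$-module with basis $(1,y)$, hence has $k$-basis $\{x^i : i \geq 0\} \cup \{x^i y : i \geq 0\}$. Likewise $\OO(V_0)$ is free over $k[x_1]$ with basis $(1, t)$. Using $x_1 = 1/x$ and $t = y/x^{g+1}$, I rewrite its $k$-basis in $(x,y)$-coordinates, obtaining $\{x^{-j} : j \geq 0\} \cup \{y/x^{g+1+j} : j \geq 0\}$. The intersection $U_0 \cap V_0$ corresponds to inverting $x$, so $\OO(U_0 \cap V_0)$ has $k$-basis $\{x^n, x^n y : n \in \mathbf{Z}\}$.

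Next, I would exploit the hyperelliptic involution, which decomposes each of these three rings into a $y$-free part and a $y$-multiple part, compatibly with the inclusions, so that the quotient can be computed componentwise. In the $y$-free part, $\OO(U_0)$ supplies all $x^n$ with $n \geq 0$ and $\OO(V_0)$ supplies all $x^n$ with $n \leq 0$; together they cover every $n \in \mathbf{Z}$, so this part contributes nothing to $H^1$. In the $y$-multiple part, $\OO(U_0)$ supplies $x^n y$ for $n \geq 0$ while $\OO(V_0)$ supplies $x^n y$ for $n \leq -(g+1)$; the missing exponents are exactly $-g \leq n \leq -1$, giving the classes of $h_i = y/x^i = t/x_1^{g+1-i}$ for $1 \leq i \leq g$ as a $k$-basis of the quotient.

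The only real obstacle is the coordinate bookkeeping for $\OO(V_0)$ in $(x,y)$-coordinates, but this is routine given the relations fixed at the start of the section, and both the linear independence and the fact that the $h_i$ span follow from the same disjointness of exponent ranges. As a consistency check, the resulting dimension $g$ matches what one obtains from Serre duality applied to Proposition~\ref{sect_glob_omega}.
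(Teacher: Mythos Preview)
Your proposal is correct and follows essentially the same approach as the paper: compute $H^1$ as the \v{C}ech quotient $\OO(U_0\cap V_0)/(\OO(U_0)+\OO(V_0))$ for the cover $(U_0,V_0)$, and identify the classes $h_i=y/x^i$ as those not hit by either summand. Your explicit decomposition via the hyperelliptic involution into the $y$-free and $y$-multiple parts makes the spanning argument self-contained, whereas the paper's proof is terser and implicitly relies on the known dimension $g$ to pass from linear independence to a basis.
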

\begin{proof}
Utilisons le recouvrement de $X_0$ par les deux ouverts affines $U_0$ et $V_0$; alors $H^1(X_0,{\cal O}_{X_0})$ s'identifie aux classes des \'el\'ements $h$ de ${\cal O}_{X_0}(U_0\cap V_0)$ qui ne s'\'etendent \`a aucun des deux ouverts $U_0$ et $V_0$, modulo la relation $h=f_U-f_V$, o\`u $f_U$ (respectivement $f_V)$ est une fonction d\'efinie sur $U_0$ (respectivement sur $V_0$). Or, d'une part 
$${\cal O}_{X_0}(U_0\cap V_0)=k\left[x,y,{1\over x}\right]=k\left[x_1,t,{1\over x_1}\right],$$
d'autre part, pour tout $i\in\bf Z$,
$${y\over x^i}={t\over x_1^{g+1-i}}.$$
On en d\'eduit que pour $1\leq i\leq g$, les sections $h_i$ ne s'\'etendent ni \`a $U_0$, ni \`a $V_0$. De plus elles forment un syst\`eme lin\'eairement ind\'ependant sur $k$.
\end{proof}

On a vu dans la proposition \ref{sect_glob_omega}
que $H^0(X,\Omega^1_{X/W})$ est le $W$-module libre de base les \'el\'ements $\omega_i=x^i{dx\over y}=x_1^{g-1-i}{dx_1\over t}$ pour $0\leq i\leq g-1$. Puisque 
$H^0(X_0,\Omega^1_{X_0/k})$ s'identifie \`a la r\'eduction modulo $p$ de $H^0(X,\Omega^1_{X/W})$, c'est un $k$-espace vectoriel de base $(\bar\omega_i)_{0\leq i\leq g-1}$ r\'eduction modulo $p$ de la famille $(\omega_i)_{0\leq i\leq g-1}$.

Le groupe de cohomologie $H^1_{DR}(X_0)$ s'identifie aux classes des \'el\'ements 
$$(\omega_U,\omega_V,h)\in \Omega^1_{X_0/k}(U_0)\oplus\Omega^1_{X_0/k}(V_0)\oplus {\cal O}_{X_0}(U_0\cap V_0)$$ tels que $\omega_V-\omega_U+dh=0$, modulo les \'el\'ements de la forme $(dh_U,-dh_V,h)$ o\`u $h=h_U+h_V$.

\begin{sousprop} \label{descr-cech}\be 
\item[(i)] L'image de $\omega\in H^0(X_0,\Omega^1_{X_0/k})$ dans $H^1(\check{\cal C}({\cal U},\Omega^\cdot_{X_0/k}))$ est la classe de $(\omega_{|_U},\omega_{|_V}, 0)$.

\item[(ii)] On peut construire algorithmiquement une application $k$-lin\'eaire 
$$\begin{array}{llll} 
s: &H^1(X_0,{\cal O}_{X_0}) &\rightarrow &H^1(\check{\cal C}({\cal U},\Omega^\cdot_{X_0/k}))\\
&\hfil [h_i] \hfil&\mapsto &[(\alpha_i^U,-\alpha_i^V,h_i)]
\end{array} $$
o\`u $\alpha_i^U\in\Omega^1_{X_0/k}(U_0)$ et $\alpha_i^V\in \Omega^1_{X_0/k}(V_0)$ v\'erifiant $dh_i=\alpha_i^U + \alpha_i^V$,
qui soit un scindage de la filtration de Hodge.
\ee 
\end{sousprop}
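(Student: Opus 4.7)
The plan is to verify (i) directly from the definition of the Hodge filtration in \v Cech coordinates and to establish (ii) by an explicit pole-decomposition of $dh_i$.

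For (i), any global differential $\omega$ restricts to sections $\omega_{|U_0}$ and $\omega_{|V_0}$ that agree on $U_0 \cap V_0$, so $(\omega_{|U_0}, \omega_{|V_0}, 0)$ trivially satisfies the cocycle condition $\omega_{|V_0} - \omega_{|U_0} + d(0) = 0$ and is by construction the cocycle representing the image of $\omega$ under the edge map $H^0(X_0, \Omega^1_{X_0/k}) \hookrightarrow H^1(\check{\cal C}({\cal U}, \Omega^\cdot_{X_0/k}))$ of the degenerate Hodge-to-de Rham spectral sequence.

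For (ii), the core task is to decompose the regular 1-form $dh_i \in \Omega^1_{X_0/k}(U_0 \cap V_0)$ as $\alpha_i^U + \alpha_i^V$ with $\alpha_i^U$ regular on $U_0$ and $\alpha_i^V$ regular on $V_0$. Using the relation $2y\,dy = P_1'(x)\,dx$ (valid because $p \neq 2$), a short computation yields
$$dh_i = \sum_{l=0}^{d} \frac{(l-2i)p_l}{2}\,\frac{x^{l-i-1}\,dx}{y},$$
where $P_1(x) = \sum p_l x^l$. Each term $\frac{x^{l-i-1}dx}{y}$ is regular on $U_0$ iff $l \geq i+1$, and, via the identity $\frac{x^j dx}{y} = -\frac{x_1^{g-1-j}dx_1}{t}$, regular on $V_0$ iff $l \leq g+i$; in particular, since $1 \le i \le g$, the terms with $l \leq i$ are only regular on $V_0$, those with $l \geq g+i+1$ only on $U_0$, and the rest are global. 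Setting $\alpha_i^U$ to be the partial sum over $l \geq i+1$ and $\alpha_i^V$ the partial sum over $l \leq i$, both regularity conditions are satisfied and $dh_i = \alpha_i^U + \alpha_i^V$. The triple $(\alpha_i^U, -\alpha_i^V, h_i)$ is then a \v Cech 1-cocycle, and the construction is manifestly algorithmic (a finite polynomial sorting driven by $P_1$).

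Finally, $s$ is a splitting because the surjection $H^1(\check{\cal C}({\cal U},\Omega^\cdot_{X_0/k})) \twoheadrightarrow H^1(X_0,\OO_{X_0})$ induced by the Hodge filtration reads off the $\check C^1(\OO)$-component of a cocycle, so it sends $s([h_i]) = [(\alpha_i^U, -\alpha_i^V, h_i)]$ back to $[h_i]$; since the $[h_i]$ form a basis of $H^1(X_0,\OO_{X_0})$ by Proposition \ref{base_gr0}, $s$ is a right inverse. The one subtlety is that the decomposition $dh_i = \alpha_i^U + \alpha_i^V$ is not unique --- two choices differ by the restrictions of a global differential on $X_0$, hence the class of $s([h_i])$ is only well-defined modulo $\fil^1 H^1_{DR}(X_0)$; the convention above (``push the $x_1$-poles to $V_0$, the $x$-poles to $U_0$'') fixes a deterministic algorithmic choice. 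The principal point to verify carefully is the regularity of each piece after rewriting in local coordinates on $V_0$, which is handled by the identity $\frac{x^j dx}{y} = -\frac{x_1^{g-1-j}dx_1}{t}$.
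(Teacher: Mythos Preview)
Your proof is correct and follows essentially the same approach as the paper's: both compute $dh_i$ explicitly using $2y\,dy = P_1'(x)\,dx$, obtain $dh_i = \bigl(\tfrac{P_1'(x)}{2x^i} - \tfrac{iP_1(x)}{x^{i+1}}\bigr)\tfrac{dx}{y}$, and then split the resulting Laurent expression into the polynomial part in $x$ (assigned to $\alpha_i^U$) and the strictly negative-degree part in $x$ (assigned to $\alpha_i^V$). Your expanded formula $dh_i=\sum_l \tfrac{(l-2i)p_l}{2}\,x^{l-i-1}\tfrac{dx}{y}$ is exactly the monomial expansion of the paper's $R_i(x)\tfrac{dx}{y}+S_i(1/x)\tfrac{dx}{y}$, and your regularity check on $V_0$ via $\tfrac{x^j dx}{y}=-\tfrac{x_1^{g-1-j}dx_1}{t}$ is the same verification the paper relies on; your explicit remark that $s$ is a section because the projection to $H^1(X_0,\OO_{X_0})$ reads off the third component, and your observation on the non-uniqueness of the decomposition up to a global form, also match the paper's discussion (the latter appearing as a remark just after the proof).
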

 
\begin{proof}
Le premier point r\'esulte de l'inclusion naturelle $H^0(X_0,\Omega^1_{X_0/k})\subset H^1(\check{\cal C}({\cal U},\Omega^\cdot_{X_0/k}))$. 

Pour d\'emontrer le point $(ii)$, pour $1\leq i\leq d$, calculons
\begin{eqnarray*}
dh_i &=&-i{y^2\over x^{i+1}}{dx\over y}+{1\over x^i}\,dy\\
&=&\left({-iP_1(x)\over x^{i+1}}+{P_1'(x)\over 2x^i}\right)\,{dx\over y}\\
&=&R_i(x)\,{dx\over y}+S_i\left({1\over x}\right)\,{dx\over y} 
\end{eqnarray*}
o\`u $R_i$ et $S_i$ sont des polyn\^omes \`a coefficients dans $k$ et $\val S_i\geq1$, ce qui assure l'unicit\'e de la d\'ecomposition. On pose alors $\alpha_i^U=R_i(x){dx\over y}$ et $\alpha_i^V=S_i\left({1\over x}\right){dx\over y}$ et on d\'efinit par lin\'earit\'e une application $s$ qui \`a $[h_i]$ associe la classe de $(\alpha_i^U,-\alpha_i^V,h_i)$ dans  $H^1(\check{\cal C}({\cal U},\Omega^\cdot_{X_0/k}))$. 
\end{proof}
\begin{sousrems}\end{sousrems}
 \be 
\item[(i)] L'application ci-dessus repose sur un choix. En effet, pour une forme diff\'erentielle globale $\omega\in H^0(X_0,\Omega^1_{X_0/k})$, on peut \'ecrire
$$dh_i=\left(R_i(x)\,{dx\over y}+\omega\right)+\left(S_i\left({1\over x}\right)\,{dx\over
y}-\omega\right),$$
ce qui induit une autre section. 

\item[(ii)] On remarque que pour $1\leq i\leq g$, $\alpha_i^U$ est une section globale de $\Omega^1_{X_0}(2g\infty)$, ce qui fait le lien avec la proposition \ref{desc2_base_inf_d2} et le corollaire \ref{suite_cark_diff2}. Rappelons que $(\omega_k)_{0\leq k\leq 2g-1}$, o\`u $\omega_k=x^k{dx\over y}$ pour $0\leq k\leq 2g-1$ forme une base de $(\Omega^1_{X_0}(2g\infty))^-$; si l'on note $(b_k)_{0\leq k\leq 2g+1}$ les coefficients de $P_1$, avec $b_{2g+1}=1$, on obtient
$$\alpha_i^U=\sum_{k=i}^{2g}{1\over2}(k-2i+1)b_{k+1}\omega_{k-i}.$$

\ee

\subsubsection{Rel\`evement du Frobenius sur les ouverts affines} \label{relev-frob}

Consid\'erons $X_1$ un rel\`evement de $X_0$ modulo $p^2$ \'egalement r\'eunion de deux ouverts affines  
$$U_1=\spec W_2(k)[x,y]/y^2-P_2(x),$$ $$V_1=\spec W_2(k)[x_1,t]/t^2-Q_2(x_1),$$ o\`u $P_2$ est la r\'eduction de $P$ modulo $p^2$ et $Q_2(x_1)=x_1^{d+1}P_2(1/x_1)=x_1 R_2(x_1)$, $t=y/x^{g+1}$ et $x_1=1/x$.

Le Frobenius de $X_0$ ne se rel\`eve pas globalement sur $X_1$, mais on peut le relever sur chacun des deux ouverts affines $U_1$ et $V_1$, ce qui permet de construire le morphisme de Deligne-Illusie via le complexe de \v{C}ech.

\begin{sousprop} Il existe deux polyn\^omes $u$ et $\alpha$ \`a coefficients dans $k$ tels qu'un rel\`evement de Frobenius sur $U_1$ soit d\'etermin\'e par
\be
\item[(i)] $F_2(x)=x^p+m_p(u(x))$ et $F_2(y)=y^p+y^pm_p(\alpha(x))$;

\item[(ii)] $u$ et $\alpha$ sont reli\'es par la relation, $$(P_2(T))^p-P_2^\sigma(T^p)= m_p\bigl(P_1'(T))^pu(T)-2\alpha(T)(P_1(T))^p\bigr).$$
\ee
\end{sousprop}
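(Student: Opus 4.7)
The plan is to exhibit explicit polynomials $u$ and $\alpha$ in $k[x]$ so that the prescribed formulas extend to a $\sigma$-semi-linear ring endomorphism of $\OO(U_1)=W_2(k)[x,y]/(y^2-P_2(x))$, and to observe that the compatibility constraint is exactly the relation (ii). Any such $F_2$ is determined by its values on the generators $x$ and $y$, and it descends to the quotient if and only if the identity $F_2(y)^2=P_2^\sigma(F_2(x))$ holds modulo $(y^2-P_2(x))$. The ansatz in (i) is natural because, combined with the action of the hyperelliptic involution $\iota(y)=-y$, it forces $F_2$ to commute with $\iota$: $F_2(x)$ is even in $y$ and $F_2(y)$ is odd.

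The next step is to compute both sides of the compatibility equation modulo $(y^2-P_2(x))$, exploiting systematically the fact that $m_p(c)^2=p^2\tilde c^2=0$ in $W_2(k)$ for any lift $\tilde c$ of $c\in k$. On the one hand,
$$F_2(y)^2=y^{2p}(1+m_p(\alpha(x)))^2=y^{2p}+m_p(2y^{2p}\alpha(x))=P_2(x)^p+m_p(2P_1(x)^p\alpha(x)),$$
using $y^{2p}=P_2(x)^p$ in the quotient together with the fact that the argument of $m_p$ only matters modulo $p$. On the other hand, Taylor-expanding $P_2^\sigma$ at $x^p$ and discarding the terms that are quadratic in $m_p(u)$,
$$P_2^\sigma(F_2(x))=P_2^\sigma(x^p)+m_p((P_1^\sigma)'(x^p)\,u(x))=P_2^\sigma(x^p)+m_p((P_1'(x))^p u(x)),$$
where we used the identity $(P_1^\sigma)'(x^p)=(P_1'(x))^p$ in $k[x]$ (freshman's dream combined with $i^p=i$ in $k$). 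Subtracting one expression from the other gives exactly the relation (ii).

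The last step is to show that (ii) is solvable. The left-hand side $P_2(x)^p-P_2^\sigma(x^p)$ lies in $pW_2(k)[x]$, because both $P_2(x)^p$ and $P_2^\sigma(x^p)$ reduce modulo $p$ to $P_1^\sigma(x^p)\in k[x]$, so it can be written as $m_p(C(x))$ for a uniquely determined $C\in k[x]$. The problem reduces to solving the polynomial equation $(P_1'(x))^p u(x)-2P_1(x)^p\alpha(x)=C(x)$ in $k[x]$. Since $P_1$ is separable in $k[x]$, the polynomials $P_1$ and $P_1'$ are coprime, and raising a B\'ezout relation $A_0P_1+B_0P_1'=1$ to the $p$-th power (noting that the intermediate binomial coefficients vanish in characteristic $p$) gives $A_0^pP_1^p+B_0^p(P_1')^p=1$. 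Setting $u=B_0^pC$ and $\alpha=-\frac{1}{2}A_0^pC$, which is legitimate because $p\neq 2$, provides the required solution. The only genuine subtlety is the systematic use of $m_p(\cdot)^2=0$ to linearize each side of the compatibility equation; once this is in place, the rest is a formal polynomial identity.
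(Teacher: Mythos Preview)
Your proof is correct and follows the same line as the paper: reduce the compatibility $F_2(y)^2=P_2^\sigma(F_2(x))$ to relation (ii) by expanding both sides and discarding $m_p(\cdot)^2$ terms, then solve via a B\'ezout identity between $P_1$ and $P_1'$. Your treatment is slightly more explicit (raising the B\'ezout relation to the $p$-th power to produce concrete $u$ and $\alpha$, and motivating the ansatz through $\iota$-equivariance), but the core argument is identical.
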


\begin{proof}
La suite exacte
$$0\longrightarrow k\longrightarrow W_2(k)\longrightarrow k\longrightarrow 0,$$
dont la premi\`ere application $m_p$ est la multiplication par $p$ et la deuxi\`eme la r\'eduction modulo $p$
permet de se ramener \`a rechercher un rel\`evement du Frobenius sur $U_1$ de la forme
$$F_2(x)=x^p+m_p(u(x))\hbox{ et } F_2(y)=y^p+m_p(u(y)),$$ o\`u $u$ est une application de $k[x,y]/y^2-P_1(x)$ dans lui-m\^eme qui doit v\'erifier $$y^{2p}+2m_p(u(y))y^p=P_2^\sigma(x^p)+P'^\sigma_2(x^p)m_p(u(x)).$$
Ici, $\sigma$ d\'esigne le Frobenius sur $W_2(k)$ qui rel\`eve l'\'el\'evation \`a la puissance $p$ sur $k$ et pour un polyn\^ome $Q(T)=\sum_{k=0}^nb_kT^k$, on pose $Q^\sigma(T)=\sum_{k=0}^n\sigma(b_k)T^k$.
Le polyn\^ome $(P_2(T))^p-P_2^\sigma(T^p)$ est un multiple de $p$ dans $W_2(k)[T]$ et on l'\'ecrit $(P_2(T))^p-P_2^\sigma(T^p)=m_p({\cal P}(T))$, o\`u ${\cal P}(T)\in k[T]$. La relation v\'erifi\'ee par $u$ est alors
$$ {\cal P}(x)=P_1'^\sigma(x^p)u(x)-2u(y)y^p.$$
Recherchons $u(y)$ sous la forme $u(y)=y^p\alpha(x)$ avec $\alpha(x)\in k[x]$.
L'\'equation s'\'ecrit 
$${\cal P}(x)=P_1'^\sigma(x^p)u(x)-2\alpha(x)y^{2p}=(P_1'(x))^pu(x)-2\alpha(x)(P_1(x))^p.$$
La condition de s\'eparabilit\'e sur $P_1$ assure que $P_1$ et $P'_1$ sont premiers entre eux et le th\'eor\`eme de B\'ezout permet de conclure \`a l'existence de polyn\^omes $u$ et $\alpha$.
\end{proof}

De la m\^eme fa\c con, sur $V_1$, on obtient en rel\`evement du Frobenius $F_2(x_1)=x_1^p+m_p(v(x_1))$ et $F_2(t)=t^p+t^pm_p(\beta(x_1))$ avec la condition
$${\cal Q}(x_1)=(Q'_1(x_1))^pv(x_1)-2Q_1(x_1)^p\beta(x_1),$$
o\`u $m_p({\cal Q}(T))=(Q_2(T))^p-Q_2^\sigma(T^p)$.

\subsubsection{Frobenius divis\'e} \label{frob-div}
\label{construction-f}
Le choix d'un rel\`evement de Frobenius sur les deux ouverts affines $U_1$ et $V_1$ permet de 
construire un morphisme 
$$f:\oplus_{i=0}^1\Omega^i_{X'_0/k}[-i]\rightarrow F_\ast\check{\cal C}^.({\cal U},\Omega^._{X_0/k}),$$
en suivant pas-\`a-pas la m\'ethode \cite{deligne_illusie_dec_dr}.

En degr\'e $0$, $f:{\cal O}_{X'_0/k}\rightarrow F_\ast{\cal O}_{U_0}\oplus F_\ast{\cal O}_{V_0}$ est l'application $\sigma$-lin\'eaire de $k$-alg\`ebres qui envoie $x$ sur $x^p$.

En degr\'e $1$, $f=(\omega_U,\omega_V,h):\Omega^1_{X'_0/k}\rightarrow F_\ast\Omega^1_{U_0/k}\oplus F_\ast\Omega^1_{V_0/k}\oplus F_\ast{\cal O}_{U_0\cap V_0}$ est induite par l'application qui localement v\'erifie
\begin{eqnarray*}f_U(dx) &=&(x^{p-1}+u'(x))dx,\\
 f_V(dx_1)&=&(x_1^{p-1}+v'(x_1))dx_1\\
h(dx)&=&u(x)+x^{2p}v(x_1).\end{eqnarray*} 

Via le quasi-isomorphisme $\Omega^._{X_0'/k}\simeq\check{\cal C}^.({\cal U},\Omega^._{X'_0/k})$ et la base $\left((\bar\omega_i)_{0\leq i\leq g-1},(s([h_i]))_{1\leq i\leq g}\right)$ de $H^1_{dR}(X'_0)$, on peut calculer explicitement l'expression du Frobenius divis\'e sur la cohomologie de $X'_0$

$$\overline\Phi_M: H^0(X'_0,\Omega^1_{X'_0/k})\oplus H^1(X'_0,{\cal O}_{X'_0})\simeq H^1_{DR}(X'_0/k)\rightarrow H^1(\check{\cal C}({\cal U},F_\ast\Omega^\cdot_{X_0/k})).$$

\paragraph{Image de $\bar\omega_i$ par $\overline\Phi_M$}

Posons 
\begin{eqnarray*} f(\bar\omega_i) &=&\left(f_U(\bar\omega_i), f_V(\bar\omega_i), h(\bar\omega_i)\right)\\
&=&\left(x^{ip}\,{x^{p-1}+u'(x)\over y^{p-1}}{dx\over y},-x_1^{p(g-1-i)}{x_1^{p-1}+v'(x_1)\over t^{p-1}}{dx_1\over t},x^{ip}{u(x)+x^{2p}v(x_1)\over y^p}\right).
\end{eqnarray*}

\begin{souslem} $f(\bar\omega_i)\in F_\ast\Omega^1_{U_0/k}\oplus F_\ast\Omega^1_{V_0/k}\oplus F_\ast{\cal O}_{U_0\cap V_0}$ et v\'erifie $$f_V(\bar\omega_i)-f_U(\bar\omega_i)+d(h(\bar\omega_i))=0.$$
\end{souslem}

\begin{proof}
En effet, le polyn\^ome d\'eriv\'e de ${\cal P}$ s'exprime simplement de deux mani\`eres
$${\cal P }'(T)= (P_1(T))^{p-1}P_1'(T)-T^{p-1}(P_1'(T))^p=(P'_1(T))^pu'(T)-2\alpha'(T)(P_1(T))^p$$
et l'on en d\'eduit la relation
$$(u'(T)+T^{p-1})(P'_1(T))^p=(P_1(T))^p(P'_1(T)+2\alpha'(T)P_1(T)).$$
Puisque $P_1$ et $P'_1$ sont premiers entre eux, $(P_1(T))^p$ divise le polyn\^ome
$u'(T)+T^{p-1}$, ce qui permet de conclure que $${x^{p-1}+u'(x)\over y^{p-1}}\in{\cal
O}_{U_0}.$$

Le m\^eme raisonnement s'applique \`a la forme diff\'erentielle sur $V_0$.

Pour ce qui concerne la fonction sur $U_0\cap V_0$, on remarque que 
\begin{eqnarray*}
{\cal P}(T) &=& T^{p(d+1)}{\cal Q}\left( {1\over T}\right)\\
&=& T^{p(d+1)}\left(\left(Q_1'\left( {1\over T}\right)\right)^pv\left( {1\over T}\right)-2\beta\left( {1\over T}\right)\left(Q_1\left( {1\over T}\right)\right)^p\right)\\
&=&\bigl((d+1)^pT^p\left(P_1(T)\right)^p-T^{2p}\left(P_1'(T)\right)^p\bigr)v\left({1\over T}\right)-2\beta\left( {1\over T}\right)\left(P_1(T)\right)^p\\
&=& \left(P_1'(T)\right)u(T)-2\alpha(T)\left(P_1(T)\right)^p,\end{eqnarray*}
ce qui nous fournit une relation
$$\left(P'_1(T)\right)^p \bigl(u(T)+X^{2p}v\left({1\over T}\right)\bigr)=\left(P_1(T)\right)^p\bigl(2\alpha(T)+(d+1)^pT^pv\left( {1\over T}\right)-2\beta\left( {1\over T}\right)\bigr)$$
et permet de conclure.

\end{proof}

Explicitons \`a pr\'esent la d\'ecomposition de la classe de $f(\bar\omega_i)$ dans $H^1(\check{\cal C}({\cal U},F_\ast\Omega^\cdot_{X_0/k}))$  en fonction de la somme directe  $H^0(X_0,\Omega^1_{X_0/k})\oplus s(H^1(X_0,{\cal O}_{X_0}))= H^1_{DR}(X_0/k)$. 

Ecrivons $[(\beta_i^U,-\beta_i^V,\bar h(\bar\omega_i))]$ l'image de $[h(\bar\omega_i)]$ par $s$ (cf. proposition \ref{descr-cech}) et posons 
$$\Phi_i=(x^{ip}\,{x^{p-1}+u'(x)\over y^{p-1}}{dx\over y}-\beta_i^U,-x_1^{p(g-1-i)}{x_1^{p-1}+v'(x_1)\over t^{p-1}}{dx_1\over t}+\beta_i^V,0).$$

\begin{souscor} Avec les notations ci-dessus, $\Phi_i\in H^0(X_0,\Omega^1_{X_0/k})$ et 
$$\overline\Phi_M(\bar\omega_i)=\Phi_i+s([h(\bar\omega_i)]).$$
Cette d\'ecomposition est la d\'ecomposition adapt\'ee au scindage de la filtration de Hodge $s$.
\end{souscor}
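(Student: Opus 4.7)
The proof is a verification in the total \v{C}ech complex. First, by the preceding lemma, $f(\bar\omega_i)$ is a 1-cocycle in $\check{\cal C}^\bullet({\cal U},\Omega^\bullet_{X_0/k})$ and hence represents the class $\overline\Phi_M(\bar\omega_i)\in H^1_{DR}(X_0)$ via the quasi-isomorphism $\Omega^\bullet_{X_0/k}\simeq\check{\cal C}^\bullet({\cal U},\Omega^\bullet_{X_0/k})$. The projection $H^1_{DR}(X_0)\twoheadrightarrow H^1(X_0,\OO_{X_0})$ obtained by reading off the \v{C}ech component sends this class to $[h(\bar\omega_i)]$ by construction.

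Since the map $s$ constructed in Proposition \ref{descr-cech} is a section of this projection, the difference $\overline\Phi_M(\bar\omega_i)-s([h(\bar\omega_i)])$ lies in the kernel, that is, in $H^0(X_0,\Omega^1_{X_0/k})\subset H^1_{DR}(X_0)$. Concretely, subtracting the representative $(\beta_i^U,-\beta_i^V,\bar h(\bar\omega_i))$ of $s([h(\bar\omega_i)])$ from $f(\bar\omega_i)$ yields a cocycle whose \v{C}ech component $h(\bar\omega_i)-\bar h(\bar\omega_i)$ is cohomologous to zero in $H^1(X_0,\OO_{X_0})$ by the defining property of $s$. A \v{C}ech coboundary of the form $(dg_U,-dg_V,g_U+g_V)$ with $g_U\in\OO_{X_0}(U_0)$ and $g_V\in\OO_{X_0}(V_0)$ absorbs this component, and the resulting cocycle is exactly the triple $\Phi_i$, giving the decomposition $\overline\Phi_M(\bar\omega_i)=\Phi_i+s([h(\bar\omega_i)])$.

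With third component zero, the cocycle condition $\omega_V-\omega_U+dh=0$ for $\Phi_i$ reduces to the equality of its two 1-form components on $U_0\cap V_0$, so $\Phi_i$ glues to a global section of $\Omega^1_{X_0/k}$, proving $\Phi_i\in H^0(X_0,\Omega^1_{X_0/k})$. Since the splitting of $H^1_{DR}(X_0)$ into $H^0(X_0,\Omega^1_{X_0/k})\oplus s(H^1(X_0,\OO_{X_0}))$ is exactly the scindage determined by $s$, the decomposition $\overline\Phi_M(\bar\omega_i)=\Phi_i+s([h(\bar\omega_i)])$ is by construction the one adapted to this scindage.

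The only genuine bookkeeping step is to check that the coboundary adjustment killing $h(\bar\omega_i)-\bar h(\bar\omega_i)$ does not disturb the regularity of either local 1-form; this is automatic because the adjustment is by $dg_U$ on $U_0$ (resp.\ $-dg_V$ on $V_0$) with $g_U$, $g_V$ regular on their respective opens. Regularity of the two unadjusted local 1-forms entering the definition of $\Phi_i$ was already established in the preceding lemma via the Bézout-type divisibilities on $u,\alpha$ and $v,\beta$ arising from the coprimality of $P_1$ and $P_1'$, so the whole construction remains purely polynomial and algorithmic, as required.
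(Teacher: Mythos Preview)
Your argument is correct and follows the same route as the paper: the paper's proof is just the direct verification that the two local components of $\Phi_i$ coincide on $U_0\cap V_0$, using the cocycle identity $f_V(\bar\omega_i)-f_U(\bar\omega_i)+d(h(\bar\omega_i))=0$ from the preceding lemma together with $\beta_i^U+\beta_i^V=d\bar h(\bar\omega_i)$, which is precisely your gluing step. Your framing via the projection $H^1_{DR}(X_0)\to H^1(X_0,\OO_{X_0})$ and the coboundary adjustment is the same argument unpacked; note only that after subtracting $(dg_U,-dg_V,g_U+g_V)$ the components literally become $f_U(\bar\omega_i)-\beta_i^U-dg_U$ and $f_V(\bar\omega_i)+\beta_i^V+dg_V$, so ``exactly the triple $\Phi_i$'' holds on the nose only when $h(\bar\omega_i)=\bar h(\bar\omega_i)$ (otherwise one is in the situation of $\Psi_i$, where the paper does record the extra $dg$-terms), but this does not affect the class-level statement you are proving.
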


\begin{proof} On v\'erifie que
$$x^{ip}\,{x^{p-1}+u'(x)\over y^{p-1}}{dx\over y}-\beta_i^U-(x_1^{p(g-1-i)}{x_1^{p-1}+v'(x_1)\over t^{p-1}}{dx_1\over t}+\beta_i^V)=f_U(\bar\omega_i)-f_V(\bar\omega_i)+dh(\bar\omega_i)=0;$$
une section globale de $\Omega^1_{X_0/k}$ est ainsi d\'efinie.
\end{proof}

\noindent{\bf Remarque.} Il suffit alors de d\'eterminer les composantes de $\overline\Phi_M(\bar\omega_i)$ dans la base $((\bar\omega_i)_{0\leq i\leq g-1},(s([h_i]))_{1\leq i\leq g})$ pour obtenir $g$ colonnes de la matrice.

\paragraph{Image de $[h_i]$ par $\overline\Phi_M$}

On a vu que $f(h_i)=h_i^p$ et $(0,0,h_i^p)$ est un cocycle par le fait que $d(h_i^p)=0$.
Pour $1\leq i\leq g$, \'ecrivons $s([h_i^p])=(\eta_i^U,-\eta_i^V,\bar h(h_i^p))$ et $h_i^p-\bar h(h_i)=h_U+h_V$, o\`u $h_U$ est une fonction d\'efinie sur $U$ (obtenue comme combinaison lin\'eaire \`a coefficients dans $k$ des fonctions $y\over x^i$ pour $i\leq0$ et $h_V$ une fonction d\'efinie sur $V$ (combinaison des fonctions $y\over x^i$ pour $i\geq g+1$).
Alors
$$(0,0,h_i^p)=s([h_i^p])+(-dh_U-\eta_i^U ,dh_V+\eta_i^V, 0)+(dh_U, -dh_V,h_U+h_V).$$
Posons $\Psi_i=(-dh_U-\eta_i^U ,dh_V+\eta_i^V, 0)$.

\begin{souscor} Avec les notations ci-dessus, $\Psi_i\in H^0(X_0,\Omega^1_{X_0/k})$ et 
$$\overline\Phi_M([h_i])=\Psi_i+s([h_i^p])$$
est la d\'ecomposition adapt\'ee au scindage de la filtration de Hodge $s$.
\end{souscor}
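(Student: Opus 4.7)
Je suivrais exactement la m\^eme structure de preuve que celle du corollaire pr\'ec\'edent concernant l'image de $\bar\omega_i$. Par d\'efinition du morphisme de Deligne-Illusie $f$ en degr\'e $0$ (qui est simplement l'\'el\'evation \`a la puissance $p$ sur les fonctions, compos\'ee avec $\sigma$), la classe $\overline\Phi_M([h_i])$ est repr\'esent\'ee dans le complexe total de \v{C}ech par le cocycle $(0,0,h_i^p)$. Il s'agit alors de d\'ecomposer ce cocycle selon le scindage $s$, en une composante provenant de $H^0(X_0,\Omega^1_{X_0/k})$ et une composante dans l'image de $s$.

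L'\'etape cl\'e utilise la proposition~\ref{base_gr0} pour \'ecrire $h_i^p - \bar h(h_i^p) = h_U + h_V$, o\`u $h_U\in\OO_{X_0}(U_0)$ et $h_V\in\OO_{X_0}(V_0)$, la classe r\'esiduelle $[h_i^p]\in H^1(X_0,\OO_{X_0})$ \'etant captur\'ee par $\bar h(h_i^p)$ (combinaison lin\'eaire des \'el\'ements de base $h_1,\ldots,h_g$). On obtient alors l'identit\'e dans le complexe total
$$(0,0,h_i^p) = s([h_i^p]) + \Psi_i + (dh_U,-dh_V,h_U+h_V),$$
o\`u $\Psi_i = (-dh_U-\eta_i^U,\eta_i^V+dh_V,0)$ par diff\'erence et o\`u le dernier terme est un cobord, donc nul en cohomologie.

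Le point d\'elicat, qui constitue l'obstacle principal, est de v\'erifier que $\Psi_i$ repr\'esente effectivement une section globale de $\Omega^1_{X_0/k}$, c'est-\`a-dire que ses composantes locales se recollent sur $U_0\cap V_0$. Cela revient \`a l'\'egalit\'e $\eta_i^U+\eta_i^V+dh_U+dh_V=0$. D'une part, la condition de cocycle pour $s([h_i^p])$ donne $\eta_i^U+\eta_i^V = d\bar h(h_i^p)$; d'autre part, $dh_U+dh_V = d(h_i^p - \bar h(h_i^p)) = d(h_i^p) - d\bar h(h_i^p)$. C'est ici qu'intervient de mani\`ere cruciale la caract\'eristique $p$ : $d(h_i^p) = p\, h_i^{p-1}dh_i = 0$, ce qui est pr\'ecis\'ement l'annulation fondamentale rendant le morphisme de Deligne-Illusie bien d\'efini. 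Il en r\'esulte $dh_U+dh_V = -d\bar h(h_i^p)$, et la somme cherch\'ee s'annule.

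Enfin, la d\'ecomposition obtenue est bien adapt\'ee au scindage $s$ par construction m\^eme des deux termes : $\Psi_i$ a sa troisi\`eme composante \v{C}ech nulle, donc provient de $H^0(X_0,\Omega^1_{X_0/k})$ via l'inclusion canonique de la proposition~\ref{descr-cech}(i), tandis que $s([h_i^p])$ appartient \`a l'image de $s$.
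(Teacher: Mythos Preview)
Your proof is correct and is exactly the argument the paper intends: it mirrors the verification done for the preceding corollary on $\bar\omega_i$, and the paper leaves this one without an explicit proof precisely because the same computation goes through. The only additional ingredient, which you identify correctly, is the vanishing $d(h_i^p)=0$ in characteristic $p$, already noted in the text just before the statement.
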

 
\noindent{\bf Remarque.} Comme ci-dessous, en d\'eterminant les composantes de $\overline\Phi_M(\bar\omega_i)$ dans la base $((\bar\omega_i)_{0\leq i\leq g-1},(s([h_i]))_{1\leq i\leq g})$, on obtient les derni\`eres $g$ colonnes de la matrice $A$ recherch\'ee.

\subsubsection{Comparaison des deux méthodes pour le calcul du $(\varphi,\Gamma)$-module mod $p$}
\label{complexite}

Notons $n$ le degr\'e de l'extension $k$ sur ${\bf F}_p$.

La   complexit\'e de l'algorithme de Kedlaya pour calculer la matrice du Frobenius sur $H^1_{DR}(X_K)$ modulo $p^2$ est $O(p^{1+\varepsilon}g^{2+\varepsilon}n^{1+\varepsilon})$  (cf \cite{Kedlaya_hyperell}). Une fois la matrice trouv\'ee, il faut effectuer encore un changement de base pour d\'eterminer la matrice $A$ dans une base du r\'eseau $H^1_{DR}(X)$. La matrice du changement de base se d\'eduit des calculs de \ref{complete_infini}: les vecteurs de la base appartenant \`a $H^0(X,\Omega^1_X)$ sont simplement multipli\'es par $2$, en revanche les autres vecteurs de la base adapt\'ee \`a la filtration sont des combinaisons lin\'eaires de tous les vecteurs de la base utilis\'ee par Kedlaya. L'op\'eration de changement de base revient alors \`a multiplier des matrices carr\'ees de taille $g$ \`a coefficients dans $k$, d'o\`u une complexit\'e en $O(g^\omega n^{1+\varepsilon})$. Au total, si l'on consid\`ere que $p$ est fix\'e, la complexit\'e de l'algorithme pour d\'eterminer la matrice $A$ via la m\'ethode de Kedlaya est $O(g^\omega n^{1+\varepsilon})$, o\`u $\omega$ est l'exposant de la multiplication des matrices et $2+\varepsilon\leq\omega$.

Evaluons maintenant la complexit\'e de l'algorithme reposant sur la m\'ethode de Deligne-Illusie. 

On commence par calculer un rel\`evement du Frobenius sur chacun des ouverts affines; pour cela, il faut utiliser deux fois l'algorithme d'Euclide \'etendu, appliqu\'e \`a des polyn\^omes de degr\'e  $\leq p(2g+1)$ \`a coefficients dans $k$. La complexit\'e est de l'ordre de $O((pgn)^{1+\varepsilon})$ (cf \cite{Bernstein}, section 22); il faut ensuite effectuer une multiplication de polyn\^omes de degr\'es au plus $(2g+1)p$ entre eux et deux divisions euclidiennes, op\'erations de complexit\'e $O((pgn)^{1+\varepsilon})$ ({\it loc. cit.} section 17). 

Pour d\'eterminer la matrice $A$ du Frobenius divis\'e via le complexe associ\'e au bicomplexe de \v Cech, on fait intervenir des listes de $2g$ \'el\'ements, repr\'esentant les composantes sur les repr\'esentants de la base. Chacune de ces listes est constitu\'ee de trois \'el\'ements, repr\'esentant la d\'ecomposition selon des classes de $$\Omega^1_{X_0/k}(U_0)\oplus\Omega^1_{X_0/k}(V_0)\oplus {\cal O}_{X_0}(U_0\cap V_0).$$ Les op\'erations qui interviennent sur ces listes sont des multiplications et additions de polyn\^omes de degr\'e $\leq p(2g+1)$ \`a coefficients dans $k$, d'une complexit\'e  de $O(n^{1+\varepsilon}g^{2+\varepsilon}p^{1+\varepsilon})$.

On en d\'eduit la complexit\'e totale qui est $O(n^{1+\varepsilon}g^{2+\varepsilon}p^{1+\varepsilon})$.


\vspace{+5mm}
\noindent Christine Huyghe et Nathalie Wach \\
\noindent IRMA \\
\noindent Universit\'e Louis Pasteur \\
\noindent 7, rue René Descartes \\
\noindent 67084 STRASBOURG cedex FRANCE \\
\noindent m\'el huyghe@math.unistra.fr, wach@math.unistra.fr,
http://www-irma.u-strasbg.fr/\textasciitilde
huyghe, http://www-irma.u-strasbg.fr/\textasciitilde wach

\end{document}